\documentclass[12pt]{amsart}
\usepackage[left=2.7cm,right=2.7cm,top=3.5cm,bottom=3cm]{geometry}
\usepackage{amssymb,amsmath,comment,mathrsfs,mathtools,tikz-cd,todonotes}
\usepackage[colorlinks=true,linkcolor=blue]{hyperref}
\usepackage{cleveref}

\setlength{\parindent}{0pt}

\theoremstyle{remark}

\newtheorem{para}{\bf}[subsection]

\newtheorem{rem}[para]{\bf Remark}

\theoremstyle{definition}

\newtheorem{dfn}[para]{Definition}

\theoremstyle{plain}

\newtheorem{theorem}[para]{Theorem}

\newtheorem{lemma}[para]{Lemma}
\newtheorem{cor}[para]{Corollary}

\newtheorem{prop}[para]{Proposition}



\newcommand{\bbF}{{\mathbb F}}
\newcommand{\bbG}{{\mathbb G}}

\newcommand{\bbN}{{\mathbb N}}

\newcommand{\bbQ}{{\mathbb Q}}

\newcommand{\bG}{{\bf G}}

\newcommand{\bN}{{\bf N}}

\newcommand{\bb}{{\bf b}}

\newcommand{\frm}{{\mathfrak m}}

\newcommand{\cA}{{\mathcal A}}
\newcommand{\cB}{{\mathcal B}}

\newcommand{\cM}{{\mathcal M}}
\newcommand{\cN}{{\mathcal N}}
\newcommand{\cO}{{\mathcal O}}

\newcommand{\cR}{{\mathcal R}}


%

\newcommand{\Fp}{{\bbF_p}}

\newcommand{\Fil}{{\rm Fil}}

\newcommand{\gr}{{\rm gr}^\bullet}

\newcommand{\height}{{\rm ht}}

\renewcommand{\mod}{\mathrm{mod}\text{ }}

\newcommand{\ovcA}{\overline{\cA}}
\newcommand{\ovcB}{\overline{\cB}}
\newcommand{\ovcR}{\overline{\cR}}
\newcommand{\ot}{\otimes}

\newcommand{\Q}{{\mathbb Q}}

\newcommand{\Qp}{{\bbQ_p}}

\newcommand{\ra}{\rightarrow}

\newcommand{\Z}{{\mathbb Z}}

\newcommand{\Zp}{{\mathbb Z_p}}


\newcommand{\I}{{\mathrm{I}}}

\keywords{Iwasawa algebra, pro-$p$ Iwahori, reductive groups}
\subjclass[2020]{Primary: 11R23, 20G25  Secondary: 16L30. }
\begin{document}

\title{Presentation of an Iwasawa algebra:\\The pro-$p$-Iwahori of  reductive groups }

\author{Aranya Lahiri}
\address[Lahiri]{University of California San Diego}
\email{arlahiri@ucsd.edu}

\author{Jishnu Ray}

\address[Ray]{Harish Chandra Research Institute, Chhatnag Road, Jhunsi, Prayagraj (Allahabad) 211 019 India}
\email{jishnuray@hri.res.in; jishnuray1992@gmail.com}

\maketitle

\begin{abstract}
    In this article we generalize results of Clozel and Ray (for $SL_2$ and $SL_n$ respectively) to give explicit ring-theoretic presentation in terms of a complete set of generators and relations of the Iwasawa algebra of the pro-$p$ Iwahori subgroup of a connected, split, reductive group $\bbG$ over $\Qp$.
\end{abstract}

\section{Introduction} 
In this article we are interested in the  Iwasawa algebra of the pro-$p$ Iwahori subgroup of the $\Qp$-valued points $G:=\bG(\Qp)$, of a connected reductive group $\bG$ defined and split over $\Qp$.

\vskip8pt

Let's recall that for a pro-$p$ group $G$ and a finite extension $L/\Qp$ with ring of integers $\cO$, the  projective limit over the set of open normal subgroups $\cN(G)$ of $G$, 
\[\Lambda(G) := \cO[[G]] := \varprojlim_{N\in \cN(G)}\cO[G/N],\]
equipped with the appropriate projective limit topology, is called the \textit{completed group ring} or the \textit{Iwasawa algebra} of $G$ over $\cO$. It is a complete pseudo-compact topological ring that plays an important role in the study of representations of $p$-adic groups and number theory via it's connection to the $p$-adic Local Langlands program. The reader may find in the  excellent survey article \cite{Ard06} a collection of several ring theoretic properties of Iwasawa algebras of a $p$-adic group and  several open problems on them.

\vskip8pt 
In this article we give a ring theoretic presentation of $\Lambda(I)$, where $I$ is the pro-$p$ Iwahori subgroup of a connected reductive group $G$ satisfying the assumptions above. We work under the assumption that $p>h+1$ where $h$ is the Coxeter number of the reductive group (see Section \ref{sec:pre}). Under this assumption the pro-$p$ Iwahori $I$ is $p$-saturated in the sense of Lazard (see Theorem \ref{Ordered basis}, \cite{L-S}). The main result of the article is,

\begin{theorem}
For $p>h+1$, the Iwasawa algebra $\Lambda(I)$ is naturally isomorphic as topological rings to $\cA/\cR$ (see Theorem \ref{Main isomorphism of iwasawa algebras}).
    
\end{theorem}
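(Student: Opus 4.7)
The plan is to exploit Lazard's structure theorem together with the explicit root-theoretic description of $I$. By Theorem \ref{Ordered basis} and the hypothesis $p>h+1$, the group $I$ is $p$-saturated and admits an ordered basis $(b_1,\dots,b_d)$ adapted to the affine root datum of $\bG$: one generator per affine positive root contributing to $I$, together with generators for the pro-$p$ component of $I\cap T$. Lazard then provides a topological $\cO$-basis of $\Lambda(I)$ consisting of the ordered monomials $\prod_i(b_i-1)^{\alpha_i}$ for $\alpha\in\bbN^d$, with the filtration on $\Lambda(I)$ controlled by the weights Lazard assigns to the basis elements.

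The first step is to construct a continuous $\cO$-algebra homomorphism $\Phi\colon \cA \to \Lambda(I)$ by sending each free generator $X_i$ of $\cA$ to $b_i-1$. The ideal $\cR$ is built to encode the group-theoretic relations read off from the Chevalley commutator formulas between root subgroups, the adjoint action of the torus generators on the root generators, and the $p$-th power expansions of each $b_i$. Showing that $\Phi$ annihilates $\cR$ therefore reduces to verifying each of these identities in $I$ after translating via $b\mapsto b-1$; the hypothesis $p>h+1$ guarantees that the relevant Chevalley structure constants lie in $\Zp$, so these identities lift to the completed group algebra with $\cO$-integral coefficients.

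For surjectivity of the induced map $\cA/\cR\to\Lambda(I)$, it suffices to observe that the image contains every ordered monomial $(b_1-1)^{\alpha_1}\cdots(b_d-1)^{\alpha_d}$; combined with pseudo-compactness of the target these span topologically. For injectivity the plan is a normal-form / diamond argument: using only the relations in $\cR$, any non-commutative monomial in the $X_i$'s should be rewritable, modulo terms of strictly higher Lazard filtration, as an $\cO$-linear combination of ordered monomials in $X_1,\dots,X_d$. Because $\cA/\cR$ is complete for the induced filtration, one obtains a well-defined topological $\cO$-basis of $\cA/\cR$, which $\Phi$ will send bijectively onto the Lazard basis of $\Lambda(I)$.

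The main obstacle will be the normal-form argument: one must show that each commutator rewriting $X_jX_i\mapsto X_iX_j+(\text{series})$ and each $p$-th power substitution strictly raises the filtration, so that the reduction to ordered form converges. This requires a careful matching of the natural filtration on $\cA$ with the Lazard filtration on $\Lambda(I)$, and is exactly where the bound $p>h+1$ becomes decisive: it simultaneously guarantees $p$-saturation of $I$ (so Lazard's filtration behaves as expected on the chosen basis) and integrality of all the root-theoretic structure constants, so that the rewriting rules in $\cR$ produce only tails of strictly higher filtration. Once this step is carried out, the final isomorphism follows by direct comparison of topological bases.
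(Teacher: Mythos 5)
Your overall architecture (a continuous surjection $\cA/\cR\to\Lambda(I)$, then a rewriting-to-ordered-form argument) is the paper's, but the step you yourself identify as the main obstacle contains a genuine error. It is \emph{not} true that the commutator rewritings produce only tails of strictly higher filtration. For the relations coming from the Chevalley commutator formula the correction terms lie in the \emph{same} graded piece as the monomial being rewritten: modulo $\Fil^{h+\height(\alpha)+\height(\beta)+1}$ one has $V_\alpha U_\beta = c_{11}U_{\alpha+\beta}+U_\beta V_\alpha$ with $\deg(U_{\alpha+\beta})=h+\height(\alpha)+\height(\beta)=\deg(V_\alpha U_\beta)$, and likewise $V_{\alpha_1}V_{\alpha_2}-V_{\alpha_2}V_{\alpha_1}\equiv c_{11}V_{\alpha_1+\alpha_2}+\cdots$ and $V_\alpha U_{-\alpha}-U_{-\alpha}V_\alpha\equiv\sum_i n_i W_{\delta_i}$, all in the top graded degree. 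So termination of your normal-form algorithm cannot be read off from the filtration. The paper terminates it by showing that the number of inversions strictly decreases at each move (Lemma \ref{Counting inversions}), and this forces a specific, non-arbitrary ordering of the basis: the positive roots must carry a compatible (additive) ordering in the sense of Papi, so that $\I(V_{\alpha_2})<\I(V_{\alpha_1+\alpha_2})<\I(V_{\alpha_1})$, $\I(U_\beta)<\I(U_{\alpha+\beta})<\I(V_\alpha)$ and $\I(U_{-\alpha})<\I(W_{\delta_i})<\I(V_\alpha)$. With a generic ordering the linear correction terms can create new inversions and the reduction need not terminate. Your proposal never mentions this choice, which is the technical heart of the argument.

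Two further repairs are needed. First, you work integrally throughout, whereas the paper first passes to $\Omega(I)=\Lambda(I)\ot_{\Zp}\Fp$: the simplified relations above hold only mod $p$ together with $p>h+1$, which kills the higher binomial coefficients (Lemma \ref{p-valuation of chooses}); integrally the relations involve exponents such as $(1+p)^{\langle\alpha,\delta\rangle}$ and $c_{ij}p^{i+j-1}$ and the reduction is not carried out at that level. The integral theorem is then recovered from the mod-$p$ one by writing $f=r_1+pf_1$ with $r_1\in\cR$, iterating, and using that $\cR$ is closed --- not by a direct comparison of integral bases. Second, no confluence (diamond) lemma is proved or needed: the rewriting only shows that ordered monomials \emph{span} ${\rm gr}^n\ovcB$, i.e.\ $\dim_{\Fp}{\rm gr}^n\ovcB\le\dim_{\Fp}{\rm gr}^n\Omega(I)$ (Theorem \ref{dimension bounding}); linear independence then comes for free from surjectivity of the induced map on graded pieces together with Lazard's computation of $\dim_{\Fp}{\rm gr}^n\Omega(I)$. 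Replacing your ``strictly higher filtration'' claim by the inversion count with Papi's ordering, and your diamond argument by this spanning-plus-dimension-count, would bring the proposal in line with a correct proof.
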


Here $\cA$ is the noncommutative power series ring over $\Z_p$ in a certain number of variables (depending on $G$) and $\cR$ is a two-sided ideal of $\cA$ coming from the Chevalley relations on the $p$-adic group $G$.

\vskip8pt

The first result in this direction was carried out by  Clozel in \cite{Clo11},  then taken up in \cite{Clo17} and used in \cite{Clo18} in context of a proposed `$p$-adic base change map'. There he gives an explicit ring theoretic presentation of the Iwasawa algebra $\Lambda(I_\Qp)$ for the pro-$p$ Iwahori subgroup $I_\Qp$ of $SL_2(\Qp)$ and of $\Lambda(I_L)$, the pro-$p$ Iwahori subgroup $I_L$ of $SL_2(L)$ for some unramified extension $L/\Qp$. Later, the second author of this article  generalized the result to the pro-$p$ Iwahori subgroup of $SL_n(\Qp)$ (see \cite{Ray20}). He also solved the case of a general uniform pro-$p$ group in \cite{Ray19}.

\vskip8pt

Here we follow the circle of ideas of Clozel and the second authors' work for $SL_2$ and $SL_n$. The main technical challenge here is that the pro-$p$ Iwahori subgroup can be equipped with a $p$-valuation with respect to which it is $p$-saturated but in general it is not a uniform group. Essentially, the main difference being the associated graded algebra in the case of a uniform pro-$p$ group is a commutative polynomial ring, while that is not the case in our setup. 

\vskip8pt

Clozel could carry out his calculations by hand because of a relatively small number of variables and the relations between them being simple and easily calculable. The  techniques used by the second author in \cite{Ray20} were also explicit but involved somewhat tedious computations involving large matrices.

\vskip8pt

Clozel's main  interest in the explicit ring theoretic presentation of the Iwasawa algebras seem to stem from the fact  that he could propose a formal candidate for the $p$-adic base change map between  $\Lambda(I_L)\ra \Lambda(I_\Qp)$. But even Clozel himself notes that the right setup for such a base change map perhaps involves the rigid analytic distribution algebras associated to the corresponding rigid analytic groups of the pro-$p$ Iwahori subgroups. And as of now, we are not sure of the significance of such a formal base change map and thus for the purpose of exposition and ease of calculations we have restricted ourselves to groups defined over $\Qp$. The same technique should generalize to groups defined over an unramified extension $L/\Qp$ as well, and using the explicit presentation it should not be difficult to define a `formal base change map' between $\Lambda(I_L)\ra\Lambda(I_\Qp)$.

\vskip8pt

\subsection{Sketch of ideas for the main theorem}
Before starting to write things formally we give a brief and slightly vague account of the main ideas going into the proof of the main theorem.

\vskip8pt
The natural $p$-valuation on the pro-$p$ Iwahori subgroup $I$ of $G$ makes it a $p$-saturated group, which in turn implies that $I$ is homeomorphic to $\mathbb{Z}_p^d$ as a $\Qp$-analytic manifold. Thus the Iwasawa algebra $\Lambda(I)$ is isomorphic as a $\Zp$-module to $\Zp[[X_1,\cdots,X_d]]$. Now the Chevalley relations between the Chevalley basis of the Lie algebra of $I$ induces relations between the variables $X_i$. We show here that this is a complete set of generator and relations.

\vskip8pt

A key observation towards the proof is to realize that it is enough to prove an analogous statement for the mod-$p$ Iwasawa algebra $\Omega(I):=\Lambda(I)\ot_\Zp \Fp$. Reduction mod-$p$ makes the Chevalley relations much easier to work with. The proof then boils down to a dimension counting of associated graded vector space of $\Omega(I)$, as executed in Theorem \ref{dimension bounding}.

We point out some useful context and contrast for the reader of this article: 
\begin{itemize}
\item By using results of a previous paper by the first author and Sorensen (\cite{L-S}) we conceptualize the calculations significantly. The key input being a systematic way of writing down a $p$-valuation and an ordered basis for the pro-$p$ Iwahori subgroup. This was missing in literature when the second author published \cite{Ray20}, which caused him to resort to the long calculations. This simplification also allows us to deal with the Chevalley relations without having to break them into too many cases. 

\item We replace the lexicographic ordering on the set of positive roots  in \cite{Ray20} by any additive ordering as constructed by Papi in \cite{Papi94}. In the process perhaps we shade more light on the seemingly adhoc choice of order in the ordered basis for $I$ in \cite{Ray20}.

\item We also provide a more conceptual proof of Theorem \ref{dimension bounding}, which is in fact the technical heart of the article. 

\item In \cite{Eri21} the author also gives a ring theoretic presentation of the mod-$p$ Iwasawa algebra. But their methods are very different from ours, and we do not know the connection between these two approaches yet.

\end{itemize}

\subsection{Applications in Iwasawa theory}
Let $E$ be an elliptic curve without complex multiplication, over a number field $F$ with good ordinary reduction at places of $F$ above $p$. Consider the \textit{trivializing extension} $F_\infty:=F[E_{p^\infty}]=\cup_{n \geq 1}F[E_{p^n}]$, where $E_{p^n}$ is the $p^n$-torsion points of $E(\overline{F})$.  This trivializing extension has Galois group $G$ which is a compact open subgroup of $GL(2,\Z_p)$ for all primes $p$; and  $G=GL(2,\Z_p)$ for all but finitely many primes $p$. The Pontryagin dual $\mathcal{X}_\infty$ of the Selmer group $\widehat{\mathrm{Sel}(E/F_\infty)}$ over this trivializing extension carries several arithmetic information of $E$ and its structure as an Iwasawa module has deep number theoretic consequences. It is conjectured (see  \cite[Conjecture 1.7]{Coates_Fragments}) that $\mathcal{X}_\infty$ is a finitely generated  torsion $\Lambda(G)$-module. See Theorem 1.10 and the example following it of \textit{(loc.cit)} for cases 
when the conjecture holds. A partial result giving the explicit structure of this dual Selmer group as an Iwasawa module is also known. More precisely, $\mathcal{X}_\infty$ is pseudo-isomorphic to a direct sum of cyclic Iwasawa modules:
$$\mathcal{X}_\infty \sim \oplus_{i=1}^m \Lambda(G)/L_i,$$
where $L_i$ are \textit{left} reflexive ideals in the Iwasawa algebra $\Lambda(G)$ (see the structure theorem in \cite[pg. 74]{CoatesSchneiderSujatha-modules}). The explicit description of these \textit{left} reflexive ideals is not known. So a natural question is whether one can use the explicit presentation of the Iwasawa algebra $\Lambda(G)$ to explicitly describe these reflexive ideals that are of arithmetic interest. Several progress has been made in the literature regarding the nonexistence of the \textit{two-sided} reflexive ideals and over the mod-$p$ Iwasawa algebra $\Omega(G)$. An element $x \in \Omega(G)$ is called \textit{normal} if $x\Omega(G)=\Omega(G)x$ and the ideal generated by a normal element is a two-sided reflexive ideal of $\Omega(G)$. When $G$ is the first congruence kernel of a split, semi-simple, simply connected Chevalley group over $\Z_p$, under certain hypothesis, the  explicit presentation of its Iwasawa algebra given by the second author in  \cite{Ray2018algebra} can be used  to show that every nonzero normal element of $\Omega(G)$ must be a unit (see \cite[Theorem 5.1]{HDR}). Hence there cannot be any two-sided non-trivial reflexive ideal generated by a normal element in the mod-$p$ Iwasawa algebra. Note that this technique of using the explicit presentation to prove such a result on the normal element (\textit{loc.cit}) also works for the prime $p=3$ and hence generalizes an earlier result  by Ardakov, Wei and Jhang; their techniques were also entirely different (see \cite[Theorem A]{AWJ}).
Such an explicit presentation of Iwasawa algebra  could also be used to reprove known results regarding the center of the mod-$p$ Iwasawa algebra (see \cite[Proposition 7.1]{HDR}). We must mention that the techniques in \cite{HDR} concerns only the first congruence kernel and hence the underlying group is uniform pro-$p$. It is a natural question, which we believe should have an affirmative answer, to generalize the results in \textit{(loc.cit)} and investigate whether the explicit presentation of the pro-$p$ Iwahori subgroup presented in this paper could be used to explore one-sided or two-sided reflexive ideals of the correponding Iwasawa algebra.

\section*{Acknowledgement}
The authors thank Peter Schneider and Claus Sorensen for several discussions throughout this project. The second author is supported by the Inspire research grant.
\section{Preliminaries}

\subsection{Iwahori factorization}\label{sec:pre}
 Let $p$ be a prime. For the sake of clarity we will discuss everything with the assumption that our ground field is $\Qp$ and just note at the beginning that most of the results in this preliminary section are usually stated in their corresponding sources in an appropriately generalized fashion for a finite extension $L/\Qp$. 
 
 We fix a connected reductive group ${\bf{G}}$ which is split over $\Qp$, and choose an $\Qp$-split maximal torus ${\bf{T}}\subset {\bf{G}}$. Let 
 $(X^*({\bf{T}}), \Phi, X_*({\bf{T}}), \Phi^{\vee})$ be the associated root datum. We fix a system of positive roots $\Phi^+$ with simple roots 
 $\Delta \subset \Phi^+$ and let $\Phi^-=-\Phi^+$. We let $\text{ht}(\cdot)$ be the height function on $\Phi$, and we recall that the 
 Coxeter number of ${\bf{G}}$ is $h=1+\text{ht}(\theta)$ where $\theta$ is the highest root.  
 
Let  $G={\bf{G}}(\Qp)$ with the usual locally profinite topology, and similarly $T={\bf{T}}(\Qp)$. Further we will denote the maximal compact subgroup by $T^0\subset T$ and its Sylow pro-$p$-subgroup by $T^1$. Recall that to each $\alpha\in \Phi$ one can attach a unipotent root subgroup 
 ${\bf{U}}_{\alpha}$ normalized by ${\bf{T}}$. We let $U_{\alpha}={\bf{U}}_{\alpha}(\Qp)$ and note that it comes with a natural isomorphism 
 $u_{\alpha}: \Qp \overset{\sim}{\longrightarrow} U_{\alpha}$ such that $tu_{\alpha}(x)t^{-1}=u_{\alpha}(\alpha(t)x)$ for all $t \in T$ and $x \in \Qp$. This gives a filtration of $U_{\alpha}$ by the subgroups $U_{\alpha,r}=u_{\alpha}((p^r))$ for varying $r \in \Z$.

We consider the full Iwahori subgroup $J$ corresponding to $\Phi^+$. If $\bG$ is assumed to be semisimple and simply connected then $J$ has the geometric interpretation of being the stabilizer of the maximal facet (or alcove) corresponding to the root datum of the Bruhat-Tits building $\cB(G)$ of $\bG$. We don't pursue the geometric aspects of $J$ and instead just note that the so-called Iwahori factorization says that multiplication defines a homeomorphism,
$$
\prod_{\alpha\in \Phi^-}U_{\alpha,1} \times T^0 \times \prod_{\alpha\in \Phi^+}U_{\alpha,0} \overset{\sim}{\longrightarrow} J.
$$
We concentrate our attention on the Sylow pro-$p$ subgroup of $J$, namely the pro-$p$ Iwahori subgroup $I$. Again multiplication gives a homeomorphism,
\begin{equation}\label{iwahori}
\prod_{\alpha\in \Phi^-}U_{\alpha,1} \times T^1 \times \prod_{\alpha\in \Phi^+}U_{\alpha,0} \overset{\sim}{\longrightarrow} I.
\end{equation}
Here the products over $\Phi^-$ and $\Phi^+$ are ordered in an arbitrarily chosen way,  see the proof of \cite[Lem.~2.3]{OS19} and Lemma 2.1, part i, in \cite{OS19}. From Section \ref{Ordered basis of pro-$p$ Iwahori}, we will fix an ordering of this product to discuss ordered basis and subsequent developments that depend on the ordered basis.

\subsection{$p$-saturated groups}
To make the article somewhat self-contained we collect here a few definitions related to $p$-valuation on a group $G$ which can be any pro-$p$, torsion free, compact $p$-adic Lie group. Let us recall that a \textbf{$p$-valuation} \cite[Chapter 23, pg 169]{Schneider_Lie} $\omega$ on the group $G$ is a real valued function, \\
$\omega : G \setminus \{1\} \ra (\frac{1}{p-1},\infty)$ with the convention $\omega(1) = \infty$, satisfying
\begin{enumerate}
    \item $\omega(g^{-1}h) \ge \min(\omega(g),\omega(h))$,
    \item $\omega([g,h])\ge \omega(g)+ \omega(h)$,
    \item $\omega(g^p)= \omega(g)+1$.
\end{enumerate}

\vskip8pt

A sequence of elements $(g_1, \cdots, g_r)$ in a group $G$ equipped with a $p$-valuation $\omega$ is called an \textbf{ordered
basis} \cite[Def, Chapter 26, pg 182]{Schneider_Lie} of $(G, \omega)$ if it satisfies

\begin{enumerate}
    \item $\mathbb{Z}_p^d \ra G, (x_1,\cdots, x_d) \ra g_1^{x_1}\cdots g_d^{x_d}$ is a homeomorphism,
    \item  $\omega(g_1^{x_1}\cdots g_d^{x_d})= \min_{1\le i\le d} (\omega(g_i)+v_p(x_i))$ for any $x_1,\cdots, x_d\in \Zp$.
\end{enumerate}
\vskip8pt
Finally a $p$-valuable group $(G,\omega)$  equipped with the valuation $\omega$ is \textbf{saturated} \cite[Def, chapter 26, pg 187]{Schneider_Lie} if  any $g$ with $\omega(g) > \frac{p}{p-1}$ is a $p$-th power of some element of $G$.
\subsection{Additive ordering on the set of positive roots}\label{sec:additive}
We recollect results from \cite{Papi94} in a form we will need to prove Lemma \ref{Counting inversions} later.

\vskip8pt

Let $W$ be the associated Weyl group to the root system $\Phi$. Let $\Delta=\{\delta_1,\cdots, \delta_n\}$ be a set of simple roots and $\Phi^+$ cooresponding set of positive roots and $\Phi^-=-\Phi^+$. And let $s_1,\cdots, s_n$ be the reflections corresponding to simple roots.

\begin{dfn}\label{associate to w }
If $S:=\{\sigma_1,\cdots,\sigma_r\}\subset \Phi^{+}$ is a subset of positive roots of $\Phi$ we say that $S$ is associated to $w\in W$ if there is a reduced expression $w= s_{i_1}\cdots s_{i_r}$ of $w$ such that $\sigma_1=\delta_{i_1}, \sigma_j= s_{i_1}\cdots s_{i_{j-1}}(\delta_{i_j})$, $\delta_{i_j}\in \Delta$.
\end{dfn}

In particular note that, if $w_0$ is the longest element of the Weyl group then $\Phi^{+}$ is associated to $w_0$.
\begin{dfn}\label{compatible ordering }
 An ordered subset $(S, <)$ of $\Phi^{+}$ is said to be \textbf{compatibly ordered} if the following two conditions hold.
 \vskip8pt
 (1) If $\lambda,\mu\in S$ with $\lambda < \mu$, and $\lambda+\mu \in \Phi$ then $\lambda+\mu \in S$ and $\lambda < \lambda+\mu < \mu$. 

 \vskip8pt

 (2) If $ \lambda+\mu\in S$, $\lambda,\mu \in \Phi^{+}$ then $\lambda$ or $\mu$ (or both) belong to $S$ and one of them precedes $\lambda+ \mu$.
\end{dfn}

The main result of \cite{Papi94} is that 
\begin{theorem}[Theorem in pg 662 of \cite{Papi94}]\label{PapiMain}
A subset $S\subset \Phi^+$ can be given a compatible ordering if and only if it is associated to some $w$ of $W$.
\end{theorem}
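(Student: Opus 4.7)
The plan is to establish both directions by induction on $r := |S|$, using the bijection between $W$ and the inversion sets $N(w) := \{\alpha \in \Phi^+ : w^{-1}(\alpha) \in \Phi^-\}$ as the central organizing principle.

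For the forward direction, I would begin from the classical identification of $\sigma_j = s_{i_1}\cdots s_{i_{j-1}}(\delta_{i_j})$ with the elements of $N(w)$ for any reduced expression $w = s_{i_1}\cdots s_{i_r}$. Condition (1) of compatibility then splits naturally: the containment $\sigma_i + \sigma_j \in S$ comes from closedness of $N(w)$ under those root sums that remain roots, while the ``between'' property $\sigma_i < \sigma_i+\sigma_j < \sigma_j$ is proved by induction --- one strips off $\sigma_1 = \delta_{i_1}$ and uses $s_{i_1}$ to identify the remaining enumeration with the one arising from the shorter reduced word $s_{i_2}\cdots s_{i_r}$. Condition (2) reduces dually to the co-closedness of $N(w)$, together with the same inductive identification via $s_{i_1}$.

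For the converse, given $(S,<)$ compatibly ordered, the first observation is that $\sigma_1$ must be a simple root: if $\sigma_1 = \lambda+\mu$ with $\lambda,\mu\in\Phi^+$, then condition (2) would force one of $\lambda,\mu$ into $S$ strictly before $\sigma_1$, contradicting minimality. Writing $\sigma_1 = \delta_{i_1}$, I would set $S' := s_{i_1}(S\setminus\{\sigma_1\}) \subset \Phi^+$, using that $s_{i_1}$ permutes $\Phi^+ \setminus \{\delta_{i_1}\}$. Transport the ordering via $s_{i_1}$, verify that $(S',<)$ remains compatibly ordered, and apply the inductive hypothesis to obtain a reduced expression $w' = s_{i_2}\cdots s_{i_r}$ with $S'$ associated to $w'$. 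Since $\delta_{i_1} \notin S' = N(w')$ (otherwise $s_{i_1}(\delta_{i_1}) = -\delta_{i_1}$ would lie in $S\setminus\{\sigma_1\}\subset\Phi^+$), we have $w'^{-1}(\delta_{i_1}) \in \Phi^+$, hence $s_{i_1}w' = s_{i_1}s_{i_2}\cdots s_{i_r}$ is reduced of length $r$ and exhibits $S$ as associated to $w := s_{i_1}w'$.

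The main obstacle I anticipate is the verification of compatibility for the transported set $(S',<)$. One must exploit that $s_{i_1}$ is an additive bijection of $\Phi$ preserving the induced order, so that a sum relation $s_{i_1}(\alpha)+s_{i_1}(\beta) \in \Phi$ in $S'$ pulls back to $\alpha+\beta \in \Phi$ with $\alpha,\beta \in S\setminus\{\sigma_1\}$. The delicate point is ensuring that $\alpha+\beta$ does not accidentally equal the stripped-off $\sigma_1 = \delta_{i_1}$; this is ruled out because if $\sigma_1 = \alpha+\beta$ with $\alpha,\beta\in\Phi^+$ appearing later in $S$, then condition (2) for $S$ would force a summand to precede $\sigma_1$, contradicting minimality. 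All other compatibility checks for $S'$ then follow by straightforward transport along $s_{i_1}$.
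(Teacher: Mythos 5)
This theorem is not proved in the paper at all: it is quoted verbatim from Papi (Theorem on p.~662 of \cite{Papi94}), so there is no internal proof to compare against. Your proposal is, in substance, a correct reconstruction of Papi's original argument: induction on $|S|$, the identification of the associated sequence $\sigma_j = s_{i_1}\cdots s_{i_{j-1}}(\delta_{i_j})$ with an enumeration of the inversion set $N(w)$, biconvexity of $N(w)$ for the set-theoretic parts of conditions (1) and (2), minimality of $\sigma_1$ forcing it to be simple, and the peeling-off step $S' = s_{i_1}(S\setminus\{\sigma_1\})$ with $\delta_{i_1}\notin N(w')$ guaranteeing that $s_{i_1}w'$ is reduced. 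The one place where your phrase ``straightforward transport along $s_{i_1}$'' undersells the work is the case where $\sigma_1=\delta_{i_1}$ itself occurs as a summand: in the forward direction, if $\sigma_1+\sigma_b=\sigma_c$, then $s_{i_1}(\sigma_c)=\tau_{b-1}-\delta_{i_1}$ is a \emph{difference}, and to get $c<b$ you must apply condition (2) of the inductive hypothesis to the identity $\tau_{b-1}=\tau_{c-1}+\delta_{i_1}$ together with $\delta_{i_1}\notin S'$; dually, in verifying condition (2) for $S'$ when $\lambda'=\delta_{i_1}$, transport fails (since $s_{i_1}(\lambda')=-\delta_{i_1}$) and you must instead invoke condition (1) for $S$ applied to the pair $\sigma_1<\nu$. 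Both cases go through, so these are omissions of detail within your framework rather than gaps in the strategy.
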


Since we already observed that $\Phi^+$ is associated to $w_0$, as a corollary of \ref{PapiMain} we get

\begin{cor}\label{Papi}
We can impose a compatible ordering on $\Phi^+$.
\end{cor}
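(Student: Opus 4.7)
The plan is to invoke Theorem \ref{PapiMain} directly, so the only thing to establish is that $\Phi^+$ itself is associated (in the sense of Definition \ref{associate to w }) to some element of $W$. As observed in the paragraph following Definition \ref{associate to w }, the natural candidate is the longest element $w_0 \in W$, and this is in fact a standard fact in the theory of Coxeter groups: given any reduced expression $w_0 = s_{i_1}\cdots s_{i_N}$ (where $N = |\Phi^+|$), the roots
\[
\sigma_1 = \delta_{i_1},\qquad \sigma_j = s_{i_1}\cdots s_{i_{j-1}}(\delta_{i_j}) \quad (2 \leq j \leq N)
\]
are pairwise distinct positive roots, and hence exhaust $\Phi^+$. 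Thus $\Phi^+$ is associated to $w_0$.

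With this in hand, the corollary follows immediately: Theorem \ref{PapiMain} guarantees that any subset of $\Phi^+$ associated to an element of $W$ admits a compatible ordering in the sense of Definition \ref{compatible ordering }, and applying this to the subset $\Phi^+$ itself (associated to $w_0$) produces the desired compatible ordering. There is no real obstacle here; the entire content of the statement is packaged into Papi's theorem, and the corollary is just the specialization to $S = \Phi^+$.

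If one wished to make the proof genuinely self-contained, the only step requiring justification beyond a one-line citation would be the claim that the roots $\sigma_j$ above enumerate $\Phi^+$ without repetition; this can be referenced from any standard text on Coxeter groups (for instance, Bourbaki, \emph{Groupes et algèbres de Lie}, Ch.~VI, or Humphreys' \emph{Reflection Groups and Coxeter Groups}), where it is proven by induction on the length of $w_0$ using the exchange condition.
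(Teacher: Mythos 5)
Your proposal is correct and follows exactly the route the paper takes: observe that $\Phi^+$ is associated to the longest element $w_0$ (via the standard enumeration of positive roots from a reduced expression) and then apply Theorem \ref{PapiMain}. The only difference is that you justify the "associated to $w_0$" step, which the paper simply asserts after Definition \ref{associate to w }.
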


\subsection{Ordered basis of pro-$p$ Iwahori}\label{Ordered basis of pro-$p$ Iwahori} We recall what we need from \cite[Section 3]{L-S} with slight modification and without any proofs.
\vskip8pt
In \cite{L-S} it was shown that the pro-$p$ Iwahori subgroup $I$ can be equipped with a $p$-valuation $\omega$ with respect to which $I$ is saturated. For us it will be important to work with a choice of an ordered basis for $(I, \omega)$ and to be able to explicitly write down the valuations of these basis elements.

\vskip8pt

Let us fix any ordering on the elements of $\Phi^{-}$ with increasing height function on the roots, the elements of $\Delta$ in an arbitrary way and the elements of $\Phi^+$ with respect to an arbitrary but fixed compatible ordering in the sense of Definition \ref{compatible ordering }. And finally we combine this ordering to a totally ordered $\Phi$ in a way such that 
for any $\beta\in \Phi^-, \delta\in \Delta, \alpha\in \Phi^+$ we have $\beta < \delta < \alpha$.

\begin{theorem}\cite[Proposition 3.4, Corollary 3.6]{L-S}\label{Ordered basis}  With $p-1 > h,$  an  ordered basis with their respective valuations for $(I, \omega)$ is given by

\[\begin{array}{ccccccccccc}
   &\bullet&  &u_\beta(p)_{\beta\in \Phi^{-}}&, &\omega(u_\beta(p)) &=& 1+\frac{{\rm ht}(\beta)}{h},&\\
    &\bullet&  &\delta^\vee(1+p)_{\delta \in \Delta}&, &\omega(\delta^\vee(1+p))&=& 1,&\\
    &\bullet&  &u_\alpha(1)_{\alpha\in \Phi^+},&, &\omega(u_\alpha(1))&=& \frac{{\rm ht}(\alpha)}{h}.&
\end{array}\]

where we order the elements of the ordered basis corresponding to  a total ordering of $\Phi$ chosen as above.
    
\end{theorem}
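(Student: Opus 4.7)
The plan is to build the $p$-valuation $\omega$ by prescribing its values on the coordinates given by the Iwahori factorization (2.1.1) and extending via the ``minimum'' formula built into the second ordered-basis axiom, so that the axiom is satisfied by construction. Explicitly, every $g\in I$ has a unique presentation
\[
g \;=\; \prod_{\beta\in\Phi^-} u_\beta(p y_\beta) \,\cdot\, \prod_{\delta\in\Delta} \delta^\vee\!\bigl((1+p)^{z_\delta}\bigr) \,\cdot\, \prod_{\alpha\in\Phi^+} u_\alpha(x_\alpha)
\]
with the products taken in the fixed total order described before the theorem, and I would set
\[
\omega(g) \;=\; \min\Bigl\{ \min_{\beta\in\Phi^-}\!\bigl(1+\tfrac{\mathrm{ht}(\beta)}{h} + v_p(y_\beta)\bigr),\ \min_{\delta\in\Delta}\!\bigl(1+v_p(z_\delta)\bigr),\ \min_{\alpha\in\Phi^+}\!\bigl(\tfrac{\mathrm{ht}(\alpha)}{h} + v_p(x_\alpha)\bigr) \Bigr\}.
\]
The homeomorphism in the first ordered-basis axiom is then the Iwahori factorization itself, so the entire theorem reduces to checking that $\omega$ is a $p$-valuation on $I$.

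Of the three $p$-valuation axioms, the cheap ones are axioms (1) and (3). The $p$-th power axiom is verified one generator at a time, using $u_\alpha(x)^p=u_\alpha(px)$ for the root subgroups and the binomial expansion $(1+p)^p = 1 + p^2(1+p(\cdots))$ for the torus (valid because $p>h+1\geq 3$ forces $p\geq 5$, so the cross terms contribute $v_p\geq 3$). The ultrametric axiom reduces, via the commutator axiom, to showing that group multiplication preserves the level sets $\{\omega\geq c\}$. The hypothesis $p>h+1$ then enters globally: the minimum prescribed value $1/h$ is strictly larger than $1/(p-1)$, placing $\omega$ in the admissible range.

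The substantive step is the commutator bound $\omega([g_i,g_j])\geq \omega(g_i)+\omega(g_j)$ for two basis generators, which I would attack through Chevalley's commutator formula
\[
[u_\alpha(x),u_\gamma(y)] \;=\; \prod_{\substack{i,j\geq 1\\ i\alpha+j\gamma\in\Phi}} u_{i\alpha+j\gamma}\bigl(c^{\alpha,\gamma}_{ij}\,x^i y^j\bigr),
\]
together with $[u_\alpha(x),\delta^\vee(t)]= u_\alpha\!\bigl((t^{\langle\alpha,\delta^\vee\rangle}-1)\,x\bigr)$ and commutativity of the torus. On each right-hand factor the desired inequality is reduced by the additivity $\mathrm{ht}(i\alpha+j\gamma)=i\,\mathrm{ht}(\alpha)+j\,\mathrm{ht}(\gamma)$ to the (trivial) fact that $v_p(c^{\alpha,\gamma}_{ij}x^i y^j)\geq i\,v_p(x)+j\,v_p(y)$, using that the Chevalley structure constants are bounded in absolute value by $h-1$ and therefore are $p$-adic units under our hypothesis $p>h+1$. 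The torus--unipotent bracket is handled similarly, the crucial point being that $t^{\langle\alpha,\delta^\vee\rangle}-1$ inherits the $v_p$-shift of $t-1$.

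The main obstacle I expect is bookkeeping rather than estimation: after one commutator step the resulting product of $u_{i\alpha+j\gamma}$-factors is \emph{not} automatically in the fixed order prescribed by the ordered basis, and each factor has to be reinserted at its proper position without spoiling the $\omega$-estimate. This is exactly the service performed by Papi's compatible ordering of $\Phi^+$ (Corollary \ref{Papi}): condition (1) of Definition \ref{compatible ordering } guarantees that whenever one must swap $u_\lambda$ and $u_\mu$ to reach the prescribed order, the Chevalley correction $u_{\lambda+\mu}$ that appears is \emph{already} positioned between $\lambda$ and $\mu$ in the total order, so the reordering process terminates without producing terms outside the compatible set. An analogous argument with the height-increasing order on $\Phi^-$ covers the negative-root stratum, and the mixed positive--negative case is absorbed by the extra factor of $p$ built into $u_\beta(p)$ for $\beta\in\Phi^-$, which supplies exactly the integral shift by $1$ needed to dominate any torus term that arises when $\alpha=-\beta$.
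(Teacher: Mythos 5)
A preliminary remark: the paper does not actually prove Theorem \ref{Ordered basis} --- it is imported from \cite{L-S} explicitly ``without any proofs'' --- so your proposal can only be measured against what such a proof must contain. Your overall strategy (read off $\omega$ from the Iwahori factorization \eqref{iwahori} via the minimum formula, then verify the $p$-valuation axioms using the Chevalley commutator formula, additivity of the height function, and the bound $p>h+1$ to control structure constants and binomial coefficients) is the right one and is consistent with how the cited reference proceeds; your generator-level estimates, including the tight case $\gamma=-\alpha$ where the extra $p$ in $u_\beta(p)$ exactly compensates the torus contribution, are correct.

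The genuine gap is the globalization step, which you label ``bookkeeping'' but which is in fact the substance of the result. A function defined coordinate-wise by a minimum formula is not automatically a $p$-valuation: axiom (1) requires $\omega(g^{-1}h)\ge\min(\omega(g),\omega(h))$ for \emph{arbitrary} $g,h\in I$, and axiom (3) is an equality for arbitrary $g$, so before either can be checked you must know how the product of two standard-form words re-expands in standard form. Equivalently, you must show that each level set $I_\nu:=\{g:\omega(g)\ge\nu\}$ is a subgroup admitting its own (filtered) Iwahori factorization, with $[I_\nu,I_\mu]\subseteq I_{\nu+\mu}$ and $I_\nu^{\,p}\subseteq I_{\nu+1}$; verifying the commutator bound only on pairs of basis generators does not by itself yield this, and your straightening procedure presupposes the very coordinate control it is meant to establish. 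Moreover, the mechanism you invoke to make the straightening terminate is misattributed: Papi's compatible ordering (Corollary \ref{Papi}) plays no role in Theorem \ref{Ordered basis}, which holds for an arbitrary ordering of the roots within $\Phi^-$ and within $\Phi^+$ --- the correction terms produced by a swap are harmless here purely because they lie in strictly deeper filtration steps. The compatible ordering is needed only later, in Lemma \ref{Counting inversions}, where one must control \emph{where} in the total order the correction term $V_{i\alpha_1+j\alpha_2}$ sits (between $V_{\alpha_2}$ and $V_{\alpha_1}$), not merely its degree.
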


Note that the description given in \cite{L-S} chooses $e^p$ as an ordered basis of $1+ (p)$. Here to make the  calculations more straight forward the natural choice seems to be $1+p$.  From this point onward we change our notation to write $h_\delta$ instead of $\delta^\vee$ to match with the notation used in \cite{Ray20}.

\begin{rem}\label{bound on valuation of basis}
For future reference we note that for any element $g$ of the ordered basis just mentioned above we have $\frac{1}{h}\le \omega(g)\le 1.$
    
\end{rem}

\subsection{Iwasawa Algebra}

Let $G$ be any profinite group and let $\cN(G)$ the set of open normal subgroups in $G$. Then we know that 
 \[G = \varprojlim_{N\in \cN(G)}G/N\]
is the projective limit, as a topological group, of the finite groups $G/N$ equipped witb the discrete topology.
By functoriality of associated group rings, the algebraic group rings $\cO[G/N]$ form, for varying $N$ in $\cN(G)$, a projective system of rings. The  projective limit
\[\Lambda(G) := \cO[[G]] := \varprojlim_{N\in \cN(G)}\cO[G/N]\]
is called the \textit{completed group ring} or the \textit{Iwasawa algebra} of $G$ over $\cO$. Equipping $\Lambda(G)$ with the projective limit topology of the natural topologies   of $\cO[G/N]$ makes it a  complete pseudo-compact topological ring. 

The embedding of $G \ra \Lambda(G)$ given by $g\ra \delta_g$ promotes to an injective embedding of the group ring $\cO[G]$  into $\Lambda(G)$ with dense image.

\vskip8pt
Now let $(G, \omega)$ be a $p$-saturated group over $\Z_p$ with a $p$-valuation $\omega$ and a choice of ordered basis $(h_1,\cdots, h_d)$. In this situation we get a homeomorphism of analytic manifolds 

\begin{align*}
     c:\mathbb{Z}_p^d &\rightarrow G  \\
    (x_1,\cdots, x_d) &\mapsto h_1^{x_1}\cdots h_d^{x_d}.
\end{align*} 

We use the standard notation $b_j= h_j-1\in \Lambda(G)$, and for $\alpha=(\alpha_1,\cdots,\alpha_d)\in \bN^d$  we let $\bb^\alpha= b_1^{\alpha_1}\cdots b_d^{\alpha_d}$. Then any element $\lambda\in \Lambda(G)$ can be written as $\lambda:= \sum_\alpha c_\alpha {\bf b}^\alpha$ with $c_\alpha\in \cO$ and we can naturally equip it with the valuation,
\begin{equation}\label{eq:evaluate}
\tilde{\omega}_\Lambda(\sum_\alpha c_\alpha {\bf b}^\alpha)= \inf_\alpha(v(c_\alpha)+\sum_{i=1}^d \alpha_i\omega(h_i) ).
\end{equation}

After pulling back and dualizing we get an isomorphism of topological $\Zp$-modules
\begin{align*}
  c_\star: \Lambda(\mathbb{Z}_p^d)    &\cong \Lambda(G)  \\
    \delta_{e_i} &\ra \delta_{h_i}.
\end{align*}
By \cite[20.1]{Schneider_Lie} we know that $\Lambda(\mathbb{Z}_p^d) \cong \Zp[[X_1,\cdots, X_d]]$  the power series ring in $d$ variables, as a topological ring.

Using \ref{Ordered basis} we thus rewrite for $I$,
\begin{lemma}[Lazard]\label{Zp module isomorphism of Iwasawa algebra} The following are isomorphic as $\Zp$-modules with an isomorphism given by, 
\begin{align*}
    \tilde{c}: \Zp[[U_\beta, V_\alpha, W_\delta: \beta\in \Phi^{-}, \alpha\in \Phi^{+}, \delta\in \Delta]] \cong \Lambda(I)\\
    1+ V_\alpha \ra u_\alpha(1),\\
    1+ W_\delta \ra h_\delta(1+p),\\
    1+ U_\beta \ra u_\beta(p).
\end{align*}
    
\end{lemma}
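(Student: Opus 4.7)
The statement is essentially a specialization of Lazard's general structure theorem to the explicit ordered basis for $I$ produced in Theorem \ref{Ordered basis}. The plan is to assemble the generic machinery already recalled in the preceding paragraphs and then simply label the variables according to this basis.

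First I would fix, once and for all, the ordered basis of $(I,\omega)$ listed in Theorem \ref{Ordered basis}: the elements $u_\beta(p)$ for $\beta\in\Phi^-$, then $h_\delta(1+p)$ for $\delta\in\Delta$, then $u_\alpha(1)$ for $\alpha\in\Phi^+$, arranged with respect to the total order on $\Phi$ chosen in Section \ref{Ordered basis of pro-$p$ Iwahori}. Call this sequence $(h_1,\ldots,h_d)$, where $d=|\Phi^-|+|\Delta|+|\Phi^+|=|\Phi|+|\Delta|$. Because $p>h+1$, this sequence is an ordered basis of the $p$-saturated group $(I,\omega)$ in the sense recalled in Section \textup{2.2}, so by the definition of an ordered basis the coordinate map
\begin{equation*}
c\colon \Zp^d \longrightarrow I,\qquad (x_1,\ldots,x_d)\longmapsto h_1^{x_1}\cdots h_d^{x_d},
\end{equation*}
is a homeomorphism of $\Qp$-analytic manifolds.

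Next I would apply functoriality of the completed group ring construction. Since $c$ is a homeomorphism of profinite groups (even of analytic manifolds, though only the topological group structure is needed), it induces a compatible system of isomorphisms of finite group rings $\cO[\Zp^d/N]\xrightarrow{\sim}\cO[I/c(N)]$ as $N$ ranges over a cofinal system of open subgroups; passing to the projective limit produces an isomorphism of topological $\Zp$-modules
\begin{equation*}
c_\star\colon \Lambda(\Zp^d)\xrightarrow{\ \sim\ }\Lambda(I),
\end{equation*}
sending the standard topological generator $\delta_{e_i}$ of the $i$th factor to $\delta_{h_i}$, as recorded in the excerpt. Combining this with the Lazard identification $\Lambda(\Zp^d)\cong\Zp[[X_1,\ldots,X_d]]$ of \cite[20.1]{Schneider_Lie}, under which $X_i=\delta_{e_i}-1$, yields an isomorphism of topological $\Zp$-modules $\Zp[[X_1,\ldots,X_d]]\xrightarrow{\sim}\Lambda(I)$ sending $1+X_i$ to $h_i$.

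Finally I would simply relabel the variables $X_i$ according to which type of basis element $h_i$ they correspond to: the indices associated with $\beta\in\Phi^-$ get renamed $U_\beta$, those with $\delta\in\Delta$ become $W_\delta$, and those with $\alpha\in\Phi^+$ become $V_\alpha$. This produces the isomorphism $\tilde c$ in the precise form stated in the lemma. There is no real obstacle here beyond being careful that the ordered basis hypothesis of Theorem \ref{Ordered basis} is in force (which is guaranteed by $p>h+1$); all substantive content has already been carried by Theorem \ref{Ordered basis} and by Lazard's theorem, and the lemma is purely the assembly of these two inputs.
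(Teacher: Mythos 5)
Your proposal is correct and is exactly the argument the paper intends: the lemma is stated as an immediate specialization of the general facts recalled just before it (the coordinate homeomorphism $c$ attached to an ordered basis, the induced isomorphism $c_\star\colon\Lambda(\Zp^d)\cong\Lambda(I)$, and the identification $\Lambda(\Zp^d)\cong\Zp[[X_1,\ldots,X_d]]$ from \cite[20.1]{Schneider_Lie}) to the ordered basis of Theorem \ref{Ordered basis}, followed by the relabeling of variables you describe. Nothing is missing.
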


\subsection{The graded structure}
For any $p$-valued group $G$, the valuation $\tilde{\omega}$ induces a natural filtration of $\Lambda(G)$ as follows. Define for $v \ge 0$,
\[\begin{array}{cccc}
    \Lambda(G)_v &:=\big\{\lambda\in \Lambda(G)\big | \tilde{\omega}(\lambda)\ge v \big\}, \\
    \Lambda(G)_{v^+} &:=\big\{\lambda\in \Lambda(G)\big | \tilde{\omega}(\lambda)> v \big\}.
\end{array}\]

This gives  a filtration and the  associated graded algebra
\[{\rm gr}^v\Lambda(G):= \Lambda(G)_v/\Lambda(G)_{v^+}; \hspace{.2cm} {\rm gr}\Lambda(G)=\bigoplus_{v\ge 0}{\rm gr}^v\Lambda(G).   \]

Under this filtration $\Lambda(I)$ is filtered by $\frac{1}{h}\bbN$.

\vskip8pt

This follows from the fact that, ${\rm gr}^v\Lambda(I)$ is generated by the independent elements 

$p^s\prod_{\beta\in \Phi^{-}} U_\beta^{n_\beta}\prod_{\delta\in \Delta} W_\delta^{n_\delta}\prod_{\alpha\in \Phi^{+}}V_{\alpha}^{n_\alpha}$ such that
\[s+ \sum_{\beta\in \Phi^{-}} \frac{h+{{\rm ht}(\beta)}}{h}n_\beta+ \sum_{\delta\in \Delta} \frac{1}{h} n_\delta+\sum_{\alpha\in \Phi^{+}}\frac{{{\rm ht}(\alpha)}}{h}n_\alpha= v.\]

The filtration on $\Lambda(I)$ induces a natural filtration and thus a grading on $\Omega(I)= \Lambda(I)\ot_\Zp \Fp$. We will use this to compute the dimension of ${\rm gr}^v\Omega(I)$ as an $\mathbb{F}_p$-vector space. 
We know that ${\rm gr}^v\Omega(I)$ is generated by $\prod_{\beta\in \Phi^{-}} U_\beta^{n_\beta}\prod_{\delta\in \Delta} W_\delta^{n_\delta}\prod_{\alpha\in \Phi^{+}}V_{\alpha}^{n_\alpha}$ with 

\[\sum_{\beta\in \Phi^{-}} \frac{h+{{\rm ht}(\beta)}}{h}n_\beta+ \sum_{\delta\in \Delta} \frac{1}{h} n_\delta+\sum_{\alpha\in \Phi^{+}}\frac{{{\rm ht}(\alpha)}}{h}n_\alpha= v.\]

\vskip8pt 

We don't change the filtered pieces if we replace $v\in \frac{1}{h}\bbN$ by $m= vh$. This change assigns the following valuations (which we still denote by $\tilde{\omega}$ by abuse of notation) to the generators of the Iwasawa algebra.

\[\begin{array}{ccccccccccc}
  &\tilde{\omega}(u_\beta(p))&=& h+{{\rm ht}(\beta)},&\\
&\tilde{\omega}((h_\delta(1+p))&=& h,&\\
 &\tilde{\omega}(u_\alpha(1))&=& {{\rm ht}(\alpha)}.&
\end{array}\]

The remarks above recover the following fact which is really due to Lazard.
\vskip8pt

\begin{theorem}\label{Dimension of graded iwasawa}
  The dimension of ${\rm gr}^m\Omega(I)$ as an $\Fp$-vector space is equal to the   dimension of space of homogeneous symmetric polynomials of degree $m$ in the variables $\{U_\beta, V_\alpha, W_\delta: \beta\in \Phi^{-}, \alpha\in \Phi^{+}, \delta\in \Delta\}$ where the variables are assigned the degrees equal to the corresponding shifted valuations of the ordered basis, $\deg(U_\beta) = h+{{\rm ht}(\beta)}, \deg(W_\delta) = h$ and $\deg(V_\alpha) = {{\rm ht}(\alpha)}$.
\end{theorem}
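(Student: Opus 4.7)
The plan is to deduce the theorem from Lazard's graded structure theorem for Iwasawa algebras of $p$-valued groups (cf.~\cite{Schneider_Lie}), applied to $(I,\omega)$ with the ordered basis of Theorem \ref{Ordered basis}.

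First, I combine Lemma \ref{Zp module isomorphism of Iwasawa algebra} with reduction modulo $p$ to obtain an $\Fp$-module isomorphism
\[
\bar{c} : \Fp[[U_\beta, V_\alpha, W_\delta]] \xrightarrow{\sim} \Omega(I),
\]
sending $U_\beta, V_\alpha, W_\delta$ to the classes of $u_\beta(p)-1$, $u_\alpha(1)-1$, $h_\delta(1+p)-1$ respectively. Writing $(h_1,\dots,h_d)$ for the ordered basis of Theorem \ref{Ordered basis} and $b_i = h_i - 1$, every element of $\Omega(I)$ admits a unique expansion $\sum_{\un} c_{\un} \bb^{\un}$, and formula \ref{eq:evaluate} reduced mod $p$ yields
\[
\tilde{\omega}\Bigl(\sum_{\un} c_{\un} \bb^{\un}\Bigr) \;=\; \inf_{\un:\,c_{\un}\neq 0} \sum_i n_i\,\omega(h_i).
\]
After the rescaling $m = vh$ of the excerpt this is precisely the weighted-degree valuation on $\Fp[[U_\beta, V_\alpha, W_\delta]]$ with $\deg U_\beta = h + {\rm ht}(\beta)$, $\deg W_\delta = h$, $\deg V_\alpha = {\rm ht}(\alpha)$.

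The crucial step is to argue that the infimum above is always attained, equivalently that the ordered monomials $\bb^{\un}$ whose (shifted) weighted degree equals $m$ descend to \emph{linearly independent} elements of ${\rm gr}^m \Omega(I)$. This amounts to identifying the graded $\Fp$-algebra ${\rm gr}\,\Omega(I)$ with the free commutative polynomial algebra $\Fp[\xi_\beta,\eta_\delta,\zeta_\alpha]$ on the images of the $b_i$, with the stated weighted degrees. Commutativity follows formally from the $p$-valuation axiom $\omega([g,h]) \ge \omega(g) + \omega(h)$, which forces all commutators to lie in strictly deeper filtration pieces. Freeness is the substantive content: it is Lazard's graded structure theorem for $p$-valued groups (cf.~\cite{Schneider_Lie}), which applies here because $p > h+1$ and Theorem \ref{Ordered basis} provide $(I,\omega)$ with an ordered basis of exactly the form required by Lazard.

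Once this graded identification is in place the theorem is immediate: ${\rm gr}^m \Omega(I)$ has as $\Fp$-basis the monomials of total weighted degree $m$ in $\xi_\beta,\eta_\delta,\zeta_\alpha$, and the number of such monomials is by definition the $\Fp$-dimension of the degree-$m$ homogeneous component of the commutative polynomial ring in the variables $U_\beta, V_\alpha, W_\delta$ with the prescribed positive integer weights. The only non-routine input is Lazard's freeness statement; everything else is a translation through $\bar{c}$ of the filtration on $\Omega(I)$ to a weighted-degree filtration on a formal power series ring, together with a count of monomials in each weighted degree.
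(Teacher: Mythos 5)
Your overall strategy is the same as the paper's: the paper does not prove this theorem from scratch either, but simply records that $\gr^v\Lambda(I)$ has the independent generators $p^s\prod_\beta U_\beta^{n_\beta}\prod_\delta W_\delta^{n_\delta}\prod_\alpha V_\alpha^{n_\alpha}$ and attributes the fact to Lazard; your reduction mod $p$, the rescaling $m=vh$, and the final monomial count all match.

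There is, however, a genuine error in your justification of the ``crucial step'': $\gr\Omega(I)$ is \emph{not} a commutative polynomial algebra in this setting, and the axiom $\omega([g,h])\ge \omega(g)+\omega(h)$ does not push commutators into strictly deeper filtration steps. It gives $\tilde{\omega}(b_gb_h-b_hb_g)\ge \omega(g)+\omega(h)$, which is exactly the filtration degree of $b_gb_h$ itself, so the commutator of symbols can survive in the graded ring. Concretely, for $g=u_{\alpha_1}(1)$, $h=u_{\alpha_2}(1)$ with $\alpha_1+\alpha_2\in\Phi$, the group commutator involves $u_{\alpha_1+\alpha_2}(\pm c_{11})$ of valuation $\height(\alpha_1+\alpha_2)/h=\omega(g)+\omega(h)$; this is precisely what relation \textbf{(R-5')} records, and the introduction of the paper explicitly identifies the non-commutativity of the associated graded ring as the main difference from the uniform case treated in earlier work. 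What Lazard's theorem actually provides is that $\gr\Lambda(I)$ is the enveloping algebra of the graded Lie algebra $\gr I$ over $\gr\Zp$, so that by Poincar\'e--Birkhoff--Witt the \emph{ordered} monomials in the symbols of the $b_i$ form a basis of each graded piece. That module-freeness (rather than ring-commutativity) is all your dimension count needs, since the graded pieces of the enveloping algebra and of the symmetric algebra on the same weighted generators have equal dimensions; with the commutativity claim replaced by the PBW statement, your argument is correct and coincides with the paper's.
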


\section{Relations in the Iwasawa Algebra}
\subsection{The Chevalley relations}
The Chevalley relations on the Chevalley basis of $I$ translate to relations between the non-commutative variables defining the Iwasawa algebra. As remarked in the introduction the main result of this article is  that these are a defining set of relations  for the Iwasawa algebra.

\vskip8pt
The Chevalley relations are the following (see \cite[pg 23]{Steinberg16}):
\\

\noindent (\textbf{Diag})  $h_\alpha(u) h_\beta(t)= h_\beta(t) h_\alpha(u).$
\\
\noindent (\textbf{Diag-uni}) $h_\delta(t) x_\alpha(u) h_\delta(t)^{-1}= x_\alpha(t^{\langle \alpha, \delta \rangle} u)$ where $\langle \alpha, \delta \rangle \in \Z.$
\\
\noindent (\textbf{Uni-1})
If $\alpha+\beta \neq 0$ and $\alpha+\beta \notin \Phi$, then we have 
$x_{\alpha_1}(t) x_{\alpha_2}(u)= x_{\alpha_2}(u)x_{\alpha_1}(t).$
\\
\noindent (\textbf{Uni-2})
If $\alpha_1+\alpha_2 \neq 0$ and $\alpha_1+\alpha_2 \in \Phi$, then we have $x_\alpha(t)x_\beta(u)= \prod_{i, j > 0} x_{i\alpha+ j\beta}(c_{ij} t^i u^j)x_\beta(u)x_\alpha(t),$ where $c_{ij}$'s are unique integers depending on $\alpha$ and $\beta$.
\\
\noindent (\textbf{Uni-3})
Let $\alpha\in \Phi^+$. Writing $\alpha=\sum_{\delta_i\in \Delta}n_i\delta_i$, we have $h_\alpha(t)= \prod_{\delta_i\in \Delta}h_{\delta_i}(t)^{n_i}$ for some non-negative integers $n_i$.  We obtain
    $$x_\alpha(1) x_{-\alpha}(p)= x_{-\alpha}(p)^Q\prod_{\delta_i\in \Delta}h_{\delta_i}(1+p)^{n_i}x_{\alpha}(1)^Q,$$
    where $Q=(1+p)^{-1}.$

We include a proof of \textbf{(Uni-3)}; the other relations are exactly as in \cite{Steinberg16}.  For any root $\alpha \in \Phi^+$, we have a homomorphism 
$$\phi_\alpha: SL_2(\Q_p) \rightarrow \langle x_{\alpha}(t), x_{-\alpha}(t); t \in \Q_p\rangle$$ such that 
\begin{align*}
    \phi_\alpha(1+tE_{12})&=x_\alpha(t),\\
    \phi_\alpha(1+tE_{21})&=x_{-\alpha}(t),\\
    \phi_{\alpha}(tE_{11}+t^{-1}E_{22})&=h_\alpha(t),
\end{align*}
where $E_{ij}$ is the $2 \times 2$ matrix with 1 in the $(i,j)$-th entry and zero elsewhere.
In $SL_2(\Z_p)$ we have the following matrix relation
$$(1+E_{12})(1+pE_{21})=\big(1+p(1+p)^{-1}E_{21}\big)\big((1+p)E_{11}+(1+p)^{-1}E_{22}\big)\big(1+(1+p)^{-1}E_{12}\big).$$
Since $\phi_{\alpha}$ is a homomorphism we obtain
$$x_{\alpha}(1)x_{-\alpha}(p)=x_{-\alpha}(p(1+p)^{-1})h_{\alpha}(1+p)x_\alpha((1+p)^{-1}),$$ 
which implies 
$$x_{\alpha}(1)x_{-\alpha}(p)= x_{-\alpha}(p)^{(1+p)^{-1}}\prod_{\delta_i\in \Delta}h_{\delta_i}(1+p)^{n_i}x_{\alpha}(1)^{(1+p)^{-1}}.$$

\subsection{Relations in the Iwasawa algebra}\label{relations}
The corresponding relations in the Iwasawa algebra are

\noindent (\textbf{R-1}) 
$(1+ W_{\delta_1})(1+ W_{\delta_2})= (1+ W_{\delta_2})(1+ W_{\delta_1}).$
\\
\noindent
 (\textbf{R-2})
$(1+ W_{\delta})(1+ V_\alpha)= (1+ V_\alpha)^M(1+ W_{\delta}), \alpha\in \Phi^{+}, M= (1+p)^{\langle \alpha, \delta\rangle}.$
\\
\noindent
 (\textbf{R-3})
 $(1+ W_{\delta})(1+ U_\beta)= (1+ U_\beta)^M(1+ W_{\delta}), \beta\in \Phi^{-}, M= (1+p)^{\langle \beta, \delta\rangle}.$
 \\
 \noindent
 (\textbf{R-4}) $V_\alpha U_\beta = U_\beta V_\alpha, \alpha \in \Phi^{+}, \beta \in \Phi^{-},\alpha+\beta \neq 0,\alpha+\beta \notin \Phi.$
 \\
 \noindent
 (\textbf{R-5})
 For $\alpha_1, \alpha_2 \in \Phi^{+}$,
    $(1+V_{\alpha_1})(1+V_{\alpha_2})=\underset{\stackrel{i, j>0}{i\alpha_1+j\alpha_2 \in \Phi}}{\prod}(1+V_{i\alpha_1+j\alpha_2})^{c_{ij}}(1+V_{\alpha_2})(1+V_{\alpha_1}).$
    \\
 \noindent
 (\textbf{R-6})   
For $\beta_1, \beta_2 \in \Phi^{-}$,
    $(1+U_{\beta_1})(1+U_{\beta_2})=\underset{\stackrel{i, j>0}{i\beta_1+j\beta_2 \in \Phi}}{\prod}(1+U_{i\beta_1+j\beta_2})^{c_{ij}p^{i+j-1}}(1+U_{\beta_2})(1+U_{\beta_1}).$
      \\
 \noindent
 (\textbf{R-7})
 For $\alpha \in \Phi^{+}, \beta \in \Phi^{-}$,
    $$(1+V_{\alpha})(1+U_{\beta})=\underset{\stackrel{i, j>0}{i\alpha+j\beta \in \Phi^{-}}}{\prod}(1+U_{i\alpha+j\beta})^{c_{ij}p^{j-1}}\underset{\stackrel{i, j>0}{i\alpha+j\beta \in \Phi^{+}}}{\prod}(1+V_{i\alpha+j\beta})^{c_{ij}p^j}(1+U_{\beta})(1+V_{\alpha}).$$
  \\
 \noindent
 (\textbf{R-8})
 With $\alpha= \sum_i n_i \delta_i,$
    $(1+V_\alpha)(1+ U_{-\alpha})= (1+ U_{-\alpha})^Q\underset{\delta_i\in \Delta}{\prod}(1+ W_{\delta_i})^{n_i }(1+V_\alpha)^Q,$
    where $Q= (1+p)^{-1}.$
\section{Explicit presentation of the Iwasawa algebra}

Let $|\Phi|+|\Delta| =d$ and let us totally order $S:=\{V_\alpha,W_\delta,U_\beta,\alpha \in \Phi^+,\delta \in \Delta,\beta \in \Phi^{-}\}$ with an order as prescribed in Theorem \ref{Ordered basis}. Let  $X:=\{X_1,\cdots, X_d\}$. It is clear that there is a natural bijection between $\psi: X \ra S$ sending $X_j$ to the $j$-th element  of $S$.

Let  $\cA$ be the universal $p$-adic algebra of non-commutative power series in the variables $X_1,\cdots, X_d$ with coefficients in $\mathbb{Z}_p$ with the ordering as in set $S$. Let us note that any monomial of degree $n$ in $\cA$ can be expressed as $a_iX_{i(1)}\cdots X_{i(n)}$ where $i$ is a map $i:\{1,\cdots, n\}\ra \{1,\cdots, d\}$.  Thus, writing $X_i:=X_{i(1)}\cdots X_{i(n)}$ we can write 
\[\cA:=\big\{f=\sum_n\sum_i a_iX_i \big| \,a_i\in \Zp\big\}.\]
The topology of $\cA$ is given by the valuation 

\[v_{\cA}(f=\sum_n\sum_i a_iX_i)=\inf_i(v_p(a_i)+ |i|),\]
where $|i|=  i(1)+\cdots+ i(n)$.
The powers of the maximal ideal  $\mathfrak{m}_{\cA}:=(p,X_1,\cdots, X_d)$ gives a fundamental system of neighborhoods of $0$.

Let $\cR \subset \cA$ be the closed two-sided ideal generated in $\cA$ by the relations \textbf{(R-1)}-\textbf{(R-8)} (after we have translated them as a relation between the variables $X_i$'s using $\psi$). Let $\overline{\cA}$ be the reduction modulo $p$ of $\cA$. The same proof as \cite[lemma 1.3]{Clo11} gives us

\begin{lemma}\label{R mod p}
    Let $\overline{\cR}$ be the image of $\cR$ in $\overline{\cA}$. Then  $\overline{\cR}$ is the closed two-sided ideal in $\overline{A}$ generated by the relations \textbf{(R-1)}-\textbf{(R-8)}.
\end{lemma}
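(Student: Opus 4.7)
The plan is a routine compactness argument mirroring the strategy of \cite[Lemma 1.3]{Clo11}. Let $\pi\colon \cA \to \ovcA$ denote the reduction map, and let $r_1,\dots,r_N \in \cA$ be the elements encoding the relations \textbf{(R-1)}--\textbf{(R-8)} (each $r_k$ being the difference ``LHS $-$ RHS''). Write $J_0 \seq \cA$ for the algebraic two-sided ideal generated by $r_1,\dots,r_N$, so that $\cR = \overline{J_0}$; likewise let $I_0 \seq \ovcA$ be the algebraic two-sided ideal generated by $\pi(r_1),\dots,\pi(r_N)$, and let $\cR'$ denote its closure. The goal is to prove $\pi(\cR) = \cR'$, for this is precisely the assertion that $\ovcR$ is the closed two-sided ideal in $\ovcA$ generated by \textbf{(R-1)}--\textbf{(R-8)}.

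First I would dispose of the easy inclusion $\pi(\cR) \seq \cR'$: since $\pi$ is a continuous surjective ring homomorphism we have $\pi(J_0) = I_0$, and continuity forces $\pi(\overline{J_0}) \seq \overline{\pi(J_0)} = \overline{I_0} = \cR'$.

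The converse inclusion $\cR' \seq \pi(\cR)$ is the content of the lemma. The key observation is that $\cA$ is \emph{compact}: as a topological $\Zp$-module it is isomorphic to the direct product $\prod_M \Zp$ indexed by the non-commutative monomials $M$ in $X_1,\dots,X_d$, and the $\frm_\cA$-adic topology coincides with the product topology, so Tychonoff yields compactness. Since $\cR$ is closed in $\cA$, it is compact, hence its continuous image $\pi(\cR)$ is compact in the Hausdorff space $\ovcA$, and therefore closed. Combining this with the evident inclusion $\pi(\cR) \supseteq \pi(J_0) = I_0$ and passing to closures then gives $\pi(\cR) \supseteq \overline{I_0} = \cR'$, completing the proof.

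The argument has no real obstacle; the only point requiring explicit mention is the compactness of $\cA$, which is a standard feature of non-commutative power-series rings with coefficients in $\Zp$ but carries essentially the full weight of the proof, as it is what turns the set-theoretic image $\pi(\cR)$ into a closed subset of $\ovcA$ and thereby lets us upgrade the algebraic identity $\pi(J_0) = I_0$ to a topological one.
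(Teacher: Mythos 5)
Your argument is correct and is essentially the proof the paper invokes by reference to \cite[Lemma 1.3]{Clo11}: the reduction map is continuous and surjective, $\cA$ is compact (product topology on $\prod_M \Zp$ agreeing with the $\frm_\cA$-adic one), so the image of the closed ideal $\cR$ is closed and contains the algebraic ideal generated by the reduced relations, while the reverse inclusion follows from continuity. No gaps; this matches the intended proof.
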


Let $A:=\Zp\{X_1,\cdots, X_d\}$ be the non-commutative  polynomial ring in the variables $X_1,\cdots, X_d$. The natural inclusion map $A\ra \cA$ has dense image.

\begin{lemma}\label{natural surjection}
  Let us (by abuse of notation) define a natural map $\psi : A \ra \Lambda(I)$ mapping $X_i \in A$ to the corresponding $i$-th element among $S:=\{V_\alpha,W_\delta,U_\beta,\alpha \in \Phi^+,\delta \in \Delta,\beta \in \Phi^{-}\}$ in the Iwasawa algebra $\Lambda(I)$. Then $\psi$ extends continuously to a surjective homomorphism $\cA \ra \Lambda(I)$.
\end{lemma}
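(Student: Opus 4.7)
The plan is to first observe that $\psi$ is a ring homomorphism by the universal property of $A$, then verify continuity via a simple valuation estimate on the generators, and finally obtain surjectivity by writing down an explicit preimage for an arbitrary element of $\Lambda(I)$ using the Lazard normal form.

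Unpacking the definition via Lemma \ref{Zp module isomorphism of Iwasawa algebra}, the map $\psi$ sends $X_i$ to $b_i := h_i - 1 \in \Lambda(I)$, where $(h_1, \ldots, h_d)$ is the ordered basis of $(I,\omega)$ from Theorem \ref{Ordered basis}. Because $A = \Zp\{X_1, \ldots, X_d\}$ is the free non-commutative $\Zp$-algebra, this data extends uniquely to a ring homomorphism $A \to \Lambda(I)$. For continuity, I would invoke equation (\ref{eq:evaluate}), which yields $\tilde{\omega}_\Lambda(b_i) = \omega(h_i) \geq \tfrac{1}{h} > 0$ by Remark \ref{bound on valuation of basis}. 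Then for any monomial $aX_{i(1)}\cdots X_{i(n)}$ with $a \in \Zp$, submultiplicativity and subadditivity of $\tilde{\omega}_\Lambda$ give
\[
\tilde{\omega}_\Lambda\bigl(\psi(aX_{i(1)}\cdots X_{i(n)})\bigr) \;\geq\; v_p(a) + \sum_{k=1}^n \omega(h_{i(k)}) \;\geq\; \tfrac{1}{h}\bigl(v_p(a)+n\bigr) \;=\; \tfrac{1}{h}\, v_{\cA}(aX_{i(1)}\cdots X_{i(n)}).
\]
Taking the infimum over the monomial summands of a general $f \in A$ then gives $\tilde{\omega}_\Lambda(\psi(f)) \geq \tfrac{1}{h} v_{\cA}(f)$, so $\psi$ is continuous for the adic topologies. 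Since $A$ is dense in the complete topological ring $\cA$, and $\Lambda(I)$ is itself complete (being pseudo-compact), $\psi$ extends uniquely to a continuous ring homomorphism $\hat{\psi}: \cA \to \Lambda(I)$.

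For surjectivity I would use Lemma \ref{Zp module isomorphism of Iwasawa algebra} directly: any $y \in \Lambda(I)$ admits a unique Lazard expansion $y = \sum_{\alpha \in \bbN^d} c_\alpha\, b_1^{\alpha_1} \cdots b_d^{\alpha_d}$ with $c_\alpha \in \Zp$. The corresponding formal series $\tilde{y} := \sum_\alpha c_\alpha X_1^{\alpha_1} \cdots X_d^{\alpha_d} \in \cA$ converges since $v_{\cA}(c_\alpha X_1^{\alpha_1}\cdots X_d^{\alpha_d}) \geq |\alpha| \to \infty$, and continuity of $\hat{\psi}$ then yields $\hat{\psi}(\tilde{y}) = y$. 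I do not anticipate a serious obstacle here: the statement is essentially a formal consequence of the generators having positive $\tilde{\omega}_\Lambda$-valuation together with the Lazard normal form. The real work, establishing that the kernel of $\hat{\psi}$ is exactly the ideal $\cR$ generated by the Chevalley relations, is deferred to the subsequent dimension-counting argument in $\Omega(I)$.
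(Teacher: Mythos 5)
Your proposal is correct and follows essentially the same route as the paper: the valuation estimate $\tilde{\omega}_\Lambda(\psi(f))\ge \tfrac{1}{h}v_\cA(f)$ (via $\omega(h_i)\ge \tfrac1h$ from Remark \ref{bound on valuation of basis}) gives continuity, and surjectivity comes from the Lazard normal form of Lemma \ref{Zp module isomorphism of Iwasawa algebra}. You merely spell out the extension-by-density and the explicit preimage that the paper dismisses as ``almost automatic,'' which is fine.
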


\begin{proof} Using Lemma \ref{Zp module isomorphism of Iwasawa algebra} and identifying via $\psi$ we can write an element $\mu\in \Lambda(I)$ as $\mu =\sum_i a_i\psi (X_i)$ and we have 
\[\begin{array}{cccc}
    \tilde{\omega}_\Lambda(\mu)&=& \inf_{i}(v_p(a_i)+ \sum_{j=1}^d i(j)\omega(h_j)) \text{ }(\text{from } \eqref{eq:evaluate}), \\
     &\ge&  \inf_{i}(\frac{v_p(a_i)}{h}+ \sum_{j=1}^d i(j)\frac{1}{h}) \;(\text{as } \omega(h_d)\ge \frac{1}{h},\; \text{remark }\ref{bound on valuation of basis}),\\
     &\ge& \frac{1}{h}v_\cA(\sum_i a_i (X_i)).
\end{array}\]

Hence the map is continuous. Surjectivity is almost automatic from the definition of $\psi$.    
\end{proof}

Consider the the natural filtration of $\ovcA$ by powers of the maximal ideal $\frm_{\ovcA}$, which we denote by $F^n\ovcA$. We have $F^n\ovcA/F^{n+1}\ovcA= {\rm gr}^n\ovcA$. The filtration $F^n$ induces a filtration on $\ovcB=\ovcA/\ovcR$
\[F^n\ovcB= F^n\ovcA+\ovcR,\]
and hence a gradation
\[{\rm gr}^n\ovcB= F^n\ovcB/F^{n+1}\ovcB.\]

\begin{theorem}\label{dimension bounding}
With the notation as above, for $n \geq 0$, we have 
\[\dim_{\Fp}{\rm gr}^n\ovcB\le \dim_{\Fp} {\rm gr}^n\Omega(I).\]   
\end{theorem}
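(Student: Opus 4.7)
The plan is to exhibit an explicit spanning set for ${\rm gr}^n\ovcB$ whose size is at most $\dim_{\Fp}{\rm gr}^n\Omega(I)$, which Theorem \ref{Dimension of graded iwasawa} computes as the number of multi-indices $(n_\beta,n_\delta,n_\alpha)\in\mathbb{N}^d$ satisfying the weighted-degree equation. The mechanism will be a PBW-style normal form obtained by repeatedly applying the mod-$p$ versions of the relations \textbf{(R-1)}--\textbf{(R-8)}.

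First I would extract from each relation its effective mod-$p$ commutation law. Lemma \ref{R mod p} identifies $\ovcR$ as the closed two-sided ideal of $\ovcA$ generated by the mod-$p$ images of \textbf{(R-1)}--\textbf{(R-8)}. Expanding each of those as a non-commutative power series and isolating the lowest-filtration piece, one reads off, for every ordered pair $X_i,X_j$ with $i>j$ in our fixed ordering, an identity of the form
\[
X_iX_j \;\equiv\; X_jX_i + \sum_k c_k X_k \pmod{\ovcR+F^{\geq N}\ovcA},
\]
where each $X_k$ is a single ordered-basis variable and $N$ is one more than the filtration degree of $X_iX_j$. The key mod-$p$ simplifications are: the diagonal factors $(1+p)^{\langle\alpha,\delta\rangle}$ in \textbf{(R-2)}, \textbf{(R-3)} and the $Q=(1+p)^{-1}$ in \textbf{(R-8)} all reduce to $1$, while the $p$-power coefficients $c_{ij}p^{i+j-1}$ in \textbf{(R-6)} and $c_{ij}p^j$ in \textbf{(R-7)} kill all but the negative-root residues.

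Second, I would verify that the residues $X_k$ sit correctly in the total ordering of Theorem \ref{Ordered basis}, which stratifies the basis into the three blocks $\Phi^-<\Delta<\Phi^+$, with the compatible ordering of Corollary \ref{Papi} inside $\Phi^+$ (resp.~the increasing-height ordering inside $\Phi^-$). Hence: in \textbf{(R-5)} and \textbf{(R-6)} the residual $V_{\alpha_1+\alpha_2}$ or $U_{\beta_1+\beta_2}$ lies strictly between $X_j$ and $X_i$ within its strand; in \textbf{(R-7)} the surviving $U$-residues sit in the $\Phi^-$ strand; in \textbf{(R-8)} the torus residues $W_{\delta_i}$ sit in $\Delta$, strictly between the $U$ and $V$ strands. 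In every case $X_k$ lies strictly between $X_j$ and $X_i$ in the total order, so a standard induction on the lexicographic pair (monomial length, number of inversions) rewrites every monomial in $\ovcA$, modulo $\ovcR$ and modulo higher filtration, as a linear combination of ordered monomials $X_1^{a_1}\cdots X_d^{a_d}$.

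Third, the count matches: the ordered monomials surviving in ${\rm gr}^n\ovcB$ are indexed by multi-indices in $\mathbb{N}^d$ subject to the same weighted-degree equation that enumerates the basis of ${\rm gr}^n\Omega(I)$ in Theorem \ref{Dimension of graded iwasawa}, yielding the claimed inequality. The principal obstacle lies in the first step, particularly in \textbf{(R-7)}: there one must separate the branches $i\alpha+j\beta\in\Phi^+$ (killed by $p^j$) from $i\alpha+j\beta\in\Phi^-$ (surviving only for $j=1$, since $p^{j-1}$ is a unit precisely then) and verify that each surviving residue lands in the $\Phi^-$ strand. After this case analysis, the rewriting induction and the enumeration of ordered monomials are routine.
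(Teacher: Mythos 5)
Your proposal follows the paper's own proof essentially step for step: your first step is the paper's Proposition \ref{prop:wrongorder} (reduction of \textbf{(R-1)}--\textbf{(R-8)} modulo the appropriate filtration, with Lemma \ref{p-valuation of chooses} handling the binomial coefficients), your second step is Lemma \ref{Counting inversions} (the inversion-reducing rewriting, resting on the additive ordering of Corollary \ref{Papi} inside $\Phi^+$ and the block ordering $\Phi^-<\Delta<\Phi^+$), and your third step is the comparison with Theorem \ref{Dimension of graded iwasawa}. One correction: in \textbf{(R-6)} no residue survives at all, since the linear term carries coefficient $c_{ij}p^{i+j-1}\equiv 0 \pmod p$ and the higher powers are killed by degree or by Lemma \ref{p-valuation of chooses} --- which is fortunate, because the residue $U_{\beta_1+\beta_2}$ you assert would \emph{not} lie between $U_{\beta_2}$ and $U_{\beta_1}$ in the increasing-height ordering of $\Phi^-$ (its height is strictly smaller than both) and has weighted degree $h$ less than that of $U_{\beta_1}U_{\beta_2}$; the paper's \textbf{(R-6')} is pure commutation, so your step two survives unchanged.
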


The proof of this theorem is the main heart of the paper and occupies the whole of Section \ref{bound_dimenion}.

\section{Bounding the dimension}\label{bound_dimenion}

\subsection{Plan for the proof of Theorem \ref{dimension bounding}} The theorem is obviously true for $n=0$ as $\mathrm{gr}^0$ is $\Fp$ and both sides equal $1$. For $n=1$, ${\rm gr}^1\ovcB$ is a quotient of the space 
$F^1\ovcA/F^{2}\ovcA$ with basis $\{V_\alpha, U_\beta\}$ where $\alpha$ varies over all the simple roots (i.e. $\mathrm{ht}(\alpha)=1$) and $\beta$ is the negative of the highest root (hence $h+\mathrm{ht}(\beta)=1$). Hence $$\dim_{\Fp}{\rm gr}^1\ovcB\le |\Delta|+1=\dim_{\Fp} {\rm gr}^1\Omega(I).$$
For the general case, we first reduce all the relations $\textbf{(R-1)} -\textbf{(R-8)}$ modulo appropriate filtrations (see Proposition \ref{prop:wrongorder} in Section \ref{sec:modulo_fil}). Proposition \ref{prop:wrongorder} tells us how the product of two variables in the \textit{wrong} order is related to the product of the variables in the \textit{right} order (the right order is the one prescribed by Theorem \ref{Ordered basis}).  Next we show that we can arrange the variables in the wrong order in ${\rm gr}^n\ovcB$, by a sequence of transpositions and put them in the right order, and at each step we reduce the number of inversions (see Lemma \ref{Counting inversions} in Section \ref{sec:inversion}).
This shows that $\dim_{\Fp}{\rm gr}^n\ovcB\le \dim_{\Fp} {\rm gr}^n\Omega(I)$ since 
${\rm gr}^n\Omega(I)$ consists of \textit{homogeneous symmetric polynomials}.

Let us start the proof of Theorem \ref{dimension bounding} by analyzing the mod-$p$ reduction of certain binomial coefficients which will be necessary to study the relations between the variables modulo filtrations of an appropriate degree inside $\overline{\mathcal{R}}.$
\begin{lemma}\label{p-valuation of chooses}
Let $1\le c \le p-1$, $k \geq 1$, $p^k\nmid r$  and $1\le r < p^k$ then we have 

\[{cp^k \choose r}\equiv 0 \text{ } (\mod p) \]
    
\end{lemma}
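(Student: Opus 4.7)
The plan is to invoke Lucas's theorem to reduce the computation of $\binom{cp^k}{r}$ modulo $p$ to a product of binomial coefficients of the base-$p$ digits of $cp^k$ and $r$. First I would write out the base-$p$ expansion of $cp^k$: since $1 \le c \le p-1$, its only nonzero digit is $c$, occurring at position $k$, with zeros at every other position. Next, because $1 \le r < p^k$, the base-$p$ expansion $r = \sum_{i=0}^{k-1} r_i p^i$ only involves positions strictly less than $k$; and the hypothesis $p^k \nmid r$ furnishes some index $0 \le j < k$ at which $r_j \neq 0$.

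Applying Lucas's theorem then gives
\[
\binom{cp^k}{r} \equiv \binom{c}{0}\prod_{i=0}^{k-1}\binom{0}{r_i} \pmod{p},
\]
and the factor $\binom{0}{r_j}$ vanishes since $r_j \geq 1$, so the entire product is zero modulo $p$.

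No serious obstacle is expected; the argument is essentially bookkeeping once Lucas's theorem is invoked, with the key point being the extraction from the hypothesis $p^k \nmid r$ of a nonzero digit at some position below $k$. An equivalent route would be via Kummer's theorem: the $p$-adic valuation $v_p\binom{cp^k}{r}$ equals the number of carries occurring when $r$ and $cp^k-r$ are added in base $p$, and the presence of a nonzero digit $r_j$ of $r$ at a position $j<k$ where $cp^k$ has digit $0$ forces at least one such carry, yielding $v_p\binom{cp^k}{r}\ge 1$.
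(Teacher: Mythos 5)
Your proof is correct, and it takes a genuinely different route from the paper's. The paper computes the exact $p$-adic valuation of $\binom{cp^k}{r}$ using Legendre's formula $v_p(n!)=\frac{n-s_p(n)}{p-1}$ together with a digit-sum identity $s_p(cp^k-r)=s_p(c-1)+(p-1)k-s_p(r-1)$ quoted from an external reference, arriving at $v_p\binom{cp^k}{r}=(p-1)k+s_p(r)-s_p(r-1)-1>0$. You instead apply Lucas's theorem directly: since $cp^k$ has base-$p$ digits zero in every position below $k$ while $r$ (being between $1$ and $p^k-1$) has a nonzero digit in some such position, one factor $\binom{0}{r_j}$ in the Lucas product vanishes. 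Your argument is more self-contained and more elementary, avoiding both Legendre's formula and the cited digit-sum identity; the paper's computation has the side benefit of producing the precise valuation (a quantitative lower bound), but only positivity is used in the sequel, so nothing is lost by your approach. One small remark: the hypothesis $p^k\nmid r$ is redundant given $1\le r<p^k$, and what actually furnishes the nonzero digit $r_j$ at a position $j<k$ is simply $r\ge 1$ combined with $r<p^k$; your Kummer-style alternative via carries is likewise valid.
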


\begin{proof} For any positive integer $n$, write $n$ in base $p$, i.e. $n=\sum_{i=0}^da_ip^d,$ where $0 \leq a_i \leq p-1$ and $a_d \neq 0$. Define the function $s_p(n):=\sum_{i=0}^da_i.$ 
Let us recall $v_p(n!)=\frac{n- s_p(n)}{p-1}$. Thus we have 
\[\begin{array}{ccccccc}
   v_p({cp^k \choose r})&= \frac{cp^k- s_p(cp^k)}{p-1}- \frac{r-s_p(r)}{p-1} -\frac{cp^k- r -s_p(cp^k- r)}{p-1}  &  \\
     &=\frac{s_p(cp^k- r)+ s_p(r)- s_p(cp^k)}{p-1}
\end{array}\]
From \cite[Proposition 2.1]{HLS} we get 
\[\begin{array}{cccc}
s_p(cp^k- r)&=& s_p(c-1)+(p-1)k - s_p (r-1)\\
s_p(cp^k- r)+ s_p(r)- s_p(cp^k)&=& s_p(c-1)+(p-1)k - s_p (r-1)+s_p(r)-c\\
= c-1+ (p-1)k - s_p (r-1)+s_p(r)-c &=& (p-1)k+s_p(r)- s_p(r-1)-1
\end{array}\]
Since we assumed $r<p^k $ we have $s_p(r-1) < (p-1)k$ and of course $s_p(r) -1 \ge 0$ . Thus we have 

\[v_p({cp^k \choose r})=(p-1)k+s_p(r)- s_p(r-1)-1 > (p-1)k-(p-1)k = 0\]
And since it is an integer we get $v_p({cp^k \choose r}) \ge 1$, which is what we needed to show.

\end{proof}

\subsection{Relations modulo appropriate filtrations}\label{sec:modulo_fil}
Recall that $\deg(W_\delta)= h, \deg(V_\alpha)= \height(\alpha), \deg(U_\beta)= h+\height(\beta)$.

\vskip8pt
The variables in relations \textbf{(R-1)} and \textbf{(R-4)} commute modulo any filtration. In this following proposition we compute the other relations modulo appropriate filtrations. We will see that we are reduced to only three relations \textbf{(R-5')}, \textbf{(R-7')} and \textbf{(R-8')} given below where the variables don't commute modulo filtration of an appropriate order. 
\begin{prop}\label{prop:wrongorder} We have 

\noindent \textbf{(R-2')} $W_\delta V_\alpha =V_\alpha W_\delta \text{ } (\mod   \Fil^{\height(\alpha)+ h+1}). $

\noindent \textbf{(R-3')} $W_{\delta} U_{\beta}=U_{\beta} W_{\delta}\text{ } (\mod   \Fil^{2h+\height(\beta)+1}).$ 

\noindent \textbf{(R-5')}
$V_{\alpha_1}V_{\alpha_2}=\underset{\underset{i\alpha_1+j\alpha_2\in \Phi}{i,j>0}}{\sum}c_{ij}V_{i\alpha_1+j\alpha_2}+V_{\alpha_2}V_{\alpha_1}\text{ }(\mod \Fil^{\height(\alpha_1)+\height(\alpha_2)+1}).$

\noindent \textbf{(R-6')} $U_{\beta_1}U_{\beta_2}=U_{\beta_2}U_{\beta_1}
\text{ } (\mod \Fil^{2h+ \height(\beta_1)+\height(\beta_2)+1}).$

\noindent \textbf{(R-7')}
$V_\alpha U_\beta= c_{11}U_{\alpha+\beta}+U_\beta V_\alpha \text{ } (\mod \Fil^{h+\height(\alpha)+ \height(\beta)+1}).$

\noindent \textbf{(R-8')}
$V_\alpha U_{-\alpha} = \underset{\delta_i \in \Delta}{\sum}n_iW_{\delta_i}+U_{-\alpha}V_\alpha \text{ } (\mod \Fil^{h+1}).$
\end{prop}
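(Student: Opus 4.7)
The plan is to work entirely in the mod-$p$ reduction $\ovcA$ and to reduce each Chevalley relation \textbf{(R-2)}--\textbf{(R-8)} modulo the stated filtration by expanding both sides and tracking lowest-degree contributions. Three observations carry essentially all the weight. First, the exponents $M=(1+p)^{\langle\cdot,\cdot\rangle}$ and $Q=(1+p)^{-1}$ appearing in \textbf{(R-2)}, \textbf{(R-3)} and \textbf{(R-8)} satisfy $M, Q \equiv 1 \pmod{p}$. Second, Lemma \ref{p-valuation of chooses} guarantees that for any exponent of the form $cp^k$ with $1 \le c \le p-1$ and $k \ge 1$, the mod-$p$ reduction of $(1+X)^{cp^k}$ has every non-constant term supported in powers of $X^{p^k}$. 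Third, the standing hypothesis $p \ge h+2$ yields $(p-1)d \ge h+1$ for any positive integer $d \ge 1$, a bound I will invoke repeatedly to push unwanted higher-order terms into the target filtration.

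For \textbf{(R-2')} and \textbf{(R-3')}, I would expand $(1+Y)^M = \sum_{k\ge 0}\binom{M}{k}Y^k$ and use $v_p(M-1)\ge 1$ together with $v_p(M-i) = 0$ for $2 \le i \le p-1$ to conclude that $\binom{M}{k} \equiv 0 \pmod{p}$ for $1 < k < p$. Hence modulo $p$, $(1+Y)^M - (1+Y)$ is supported in degrees $\ge p\deg(Y) \ge \deg(Y)+h+1$ by the key inequality. Comparing the two sides of \textbf{(R-2)}/\textbf{(R-3)} forces the commutator $W_\delta Y - Y W_\delta$ to lie in the target filtration.

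For \textbf{(R-5')}, the exponents $c_{ij}$ carry no $p$-factor, so each $(1+V_{i\alpha_1+j\alpha_2})^{c_{ij}}$ begins as $1 + c_{ij}V_{i\alpha_1+j\alpha_2}$ at degree $i\,\height(\alpha_1)+j\,\height(\alpha_2)$; only $(i,j)=(1,1)$ yields a term at the critical degree $\height(\alpha_1)+\height(\alpha_2)$, while any pair with $i+j\ge 3$ already lies at degree $\ge \height(\alpha_1)+\height(\alpha_2)+1$. Matching terms gives \textbf{(R-5')}. In \textbf{(R-6')} and \textbf{(R-7')} every exponent carries a $p$-factor except for the $(1,1)$-term in the $U$-product of \textbf{(R-7')}; by Lemma \ref{p-valuation of chooses}, each such factor's mod-$p$ reduction is pushed into degree $\ge p^k\deg(X) \ge \deg(X)+h+1$, already in the target filtration. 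Consequently $U_{\beta_1}$ and $U_{\beta_2}$ commute in \textbf{(R-6')}, while in \textbf{(R-7')} the sole surviving cross-term is $c_{11}U_{\alpha+\beta}$ (when $\alpha+\beta\in\Phi^-$) at the critical degree $h+\height(\alpha)+\height(\beta)$. If instead $\alpha+\beta\in\Phi^+$, the corresponding $V$-factor carries exponent $c_{11}p$, whose mod-$p$ reduction begins at $V_{\alpha+\beta}^p$ of degree $p\,\height(\alpha+\beta)\ge h+\height(\alpha)+\height(\beta)+1$, yielding no contribution.

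For \textbf{(R-8')}, the argument from \textbf{(R-2')}--\textbf{(R-3')} with $Q\equiv 1 \pmod{p}$ gives $(1+V_\alpha)^Q \equiv 1+V_\alpha$ and $(1+U_{-\alpha})^Q \equiv 1+U_{-\alpha}$ modulo $\Fil^{h+1}$. The middle factor $\prod_i(1+W_{\delta_i})^{n_i}$ reduces to $1+\sum_i n_iW_{\delta_i}$ modulo $\Fil^{h+1}$ since any product of two or more $W$'s has degree $\ge 2h\ge h+1$. The cross-terms $U_{-\alpha}W_{\delta_i}$ and $W_{\delta_i}V_\alpha$ likewise have degree $\ge h+1$ and vanish in the quotient, and matching the remaining terms with the left-hand side yields \textbf{(R-8')}. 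The main technical obstacle will be the careful bookkeeping in \textbf{(R-7')}, where the pairs $(i,j)\neq(1,1)$ must be controlled simultaneously across both $U$-factors (exponent $c_{ij}p^{j-1}$) and $V$-factors (exponent $c_{ij}p^j$); the tightest cases are $(1,j)$ with $j\ge 2$, where one needs $(p-1)(h+\height(\alpha)+j\height(\beta))\ge h+1$ to push $U_{\alpha+j\beta}^{p^{j-1}}$ above the target filtration, and this is where the hypothesis $p\ge h+2$ is put to work via the bound $-\height(\beta)+1 \le h \le p-1$.
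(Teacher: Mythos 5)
Your overall strategy coincides with the paper's: reduce each Chevalley relation modulo $p$ and modulo the stated filtration, kill the binomial coefficients $\binom{M}{k}$ for $1<k<p$ using $M\equiv 1\ (\mathrm{mod}\ p)$, invoke Lemma \ref{p-valuation of chooses} for exponents of the form $cp^k$, and finish with degree estimates based on $p>h+1$ and $|\height(\gamma)|\le h-1$. However, two steps as written do not hold up. The more serious one is in \textbf{(R-7')}: it is not true that ``every exponent carries a $p$-factor except for the $(1,1)$-term in the $U$-product.'' In the product over $i\alpha+j\beta\in\Phi^-$ the exponent is $c_{ij}p^{j-1}$, so \emph{every} pair $(i,1)$ with $i\ge 2$ has exponent $c_{i1}$ with no $p$-factor; such pairs do occur (e.g.\ $\alpha=\alpha_1$, $\beta=-(2\alpha_1+\alpha_2)$ in type $B_2$ gives $2\alpha+\beta=-\alpha_2\in\Phi^-$), and Lemma \ref{p-valuation of chooses} says nothing about them. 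These factors contribute genuine linear terms $c_{i1}U_{i\alpha+\beta}$ that your argument never disposes of; they must be killed by the degree estimate $\deg(U_{i\alpha+\beta})=h+i\,\height(\alpha)+\height(\beta)>h+\height(\alpha)+\height(\beta)$ for $i\ge2$, which is how the paper's proof handles them (``with equality only when $m=1$, $i=1$'').

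Secondly, in \textbf{(R-6')} you assert that degree $\ge p^k\deg(X)\ge\deg(X)+h+1$ places a term ``already in the target filtration.'' For $X=U_{i\beta_1+j\beta_2}$ with $(i,j)\ne(1,1)$ one has $\deg(X)+h+1=2h+i\,\height(\beta_1)+j\,\height(\beta_2)+1<2h+\height(\beta_1)+\height(\beta_2)+1$ precisely because the heights are negative, so $\deg(X)+h+1$ is \emph{not} in the target filtration; you must instead use the full strength of $p^k\deg(X)\ge p^2$ for $k=i+j-1\ge 2$ against the bound $2h+\height(\beta_1)+\height(\beta_2)+1\le 2h-1$, reserving the chain $(p-1)\deg(X)\ge h+1$ for the single case $(i,j)=(1,1)$ where $\deg(X)+h+1$ does equal the target. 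Both gaps are repairable by the degree bookkeeping the paper carries out, but as stated your dichotomy (``$p$-factor present, hence Lemma \ref{p-valuation of chooses} applies, versus the lone $(1,1)$-term'') does not cover all the terms.
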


\begin{proof}
Consider relation
 \textbf{(R-2)}

\[(1+ W_{\delta})(1+ V_\alpha)= (1+ V_\alpha)^M(1+ W_{\delta}), \alpha\in \Phi^{+}, M= (1+p)^{\langle \alpha, \delta\rangle}. \]

Note that $\deg(W_\delta V_\alpha)=h+\height(\alpha)$ and so we are interested in studying relation \textbf{(R-1)} modulo $\Fil^{\height(\alpha)+ h+1}$. 
 
We want to show that relation
 \textbf{(R-2)} reduces to

\[(1+ W_{\delta})(1+ V_\alpha)= (1+ V_\alpha)(1+ W_{\delta}) \text{ } (\mod   \Fil^{\height(\alpha)+ h+1}).\]

We will show that by proving
\[(1+ V_\alpha)^M\equiv (1+ V_\alpha) \text{ } (\mod \Fil^{\height(\alpha)+h+1}). \]

We note that $M\equiv 1 \text{ } (\mod p)$ and

\[(1+ V_\alpha)^M= 1 + M V_\alpha+ {M\choose 2} V_\alpha^2+\cdots\]

Since degree of $V_\alpha^m= \height(\alpha)m$ for any $m\in \bbN$ we see that $V_\alpha^m \equiv 0 \text{ } (\mod \Fil^{\height(\alpha)+h+1})$ as long as $\height(\alpha)m >\height(\alpha)+h$. So we will be done if we can show ${M \choose m} \equiv 0 \text{ } (\mod p)$ for $m\ge 2$ and $\height(\alpha)m \le\height(\alpha)+h$ or rewriting, $\height(\alpha)(m-1) \le h$ then we are done. But note that, if $\height(\alpha)(m-1)\le h$ then certainly $m-1\le h$. Thus if we can prove ${M \choose m} \equiv 0 \text{ } (\mod p)$ for $m-1\le h$ we are done. By assumption, $h< p-1$, thus if $m-1\le h < p-1 $ then $m < p$. So it's clear in that case ${M \choose m} \equiv 0 \text{ }(\mod p)$.
\vskip8pt
This proves  \textbf{(R-2')}. 

Next we want to show that relation 
 \textbf{(R-3)}
\[(1+ W_{\delta})(1+ U_\beta)= (1+ U_\beta)^M(1+ W_{\delta}), \beta\in \Phi^{-}, M= (1+p)^{\langle \beta, \delta\rangle}, \]

reduces to 

\[(1+ W_{\delta})(1+ U_\beta)= (1+ U_\beta)(1+ W_{\delta})\text{ } (\mod   \Fil^{2h+\height(\beta)+1}). \]

Just like the last relation we want to show

\[(1+ U_\beta)^M\equiv 1+ U_\beta \text{ } (\mod \Fil^{2h+\height(\beta)+1}).\]
When $(\height(\beta)+h)m > 2h+\height(\beta)$ then $U_\beta^m \equiv 0 \text{ }  (\mod  \Fil^{2h+\height(\beta)+1})$ thus we need to worry about $m\ge 2$ and $(\height(\beta)+h)m \le 2h+\height(\beta)$ or rewriting  $(\height(\beta)+h)(m-1) \le h$. But pretty much by the same argument as above we are reduced to the case $m-1\le h$ and the above calculation goes through which proves \textbf{(R-3')}.
\vskip8pt

Next consider relation \textbf{(R-6)}.
We want to show the relations,
for $\beta_1, \beta_2 \in \Phi^{-}$,
    \[(1+U_{\beta_1})(1+U_{\beta_2})=\prod_{\stackrel{i, j>0}{i\beta_1+j\beta_2 \in \Phi}}(1+U_{i\beta_1+j\beta_2})^{c_{ij}p^{i+j-1}}(1+U_{\beta_2})(1+U_{\beta_1}),\]

reduces to 

    \[(1+U_{\beta_1})(1+U_{\beta_2})=(1+U_{\beta_2})(1+U_{\beta_1})\text{ } (\mod \Fil^{2h+ \height(\beta_1)+\height(\beta_2)+1}).\]

 We need to analyze    $(1+U_{i\beta_1+j\beta_2})^{c_{ij}p^{i+j-1}}$ carefully. As usual $$\deg(U_{i\beta_1+j\beta_2}^m)= m(h+ \height(i\beta_1+j\beta_2)).$$ Thus for any $m$ such that 
 $m(h+ \height(i\beta_1+j\beta_2)) > 2h+ \height(\beta_1)+\height(\beta_2) $, we have 
 \[\deg(U_{i\beta_1+j\beta_2}^m)\equiv 0 \text{ }(\mod \Fil^{2h+ \height(\beta_1)+\height(\beta_2)}+1).\]
From now on we safely make the assumption,  

\[\begin{array}{ccccc}
   m(h+ \height(i\beta_1+j\beta_2))    &\le& 2h+ \height(\beta_1)+\height(\beta_2) \\
  \text{gives}\;   m &\le& \frac{2h+ \height(\beta_1)+\height(\beta_2)}{h+ \height(i\beta_1+j\beta_2)}\\
  &\le& 2h -2 \;(\text{as } h+ \height(i\beta_1+j\beta_2)\ge 1)\\
  &\le& 2(p-1)-2= 2p-4 \;(\text{as } p>1+h).
\end{array}\]

For any prime $p\ge 3$ and any $k\ge 2$ we have $p^k > 2p- 4$. 
Thus we have
\[{c_{ij}p^{i+j-1}\choose m} \equiv 0 \text{ } (\mod p),\; m\le 2p-4 \;\text{and}\;i+j-1 \ge 2.\]
We are left to deal with the case $i+j= 2$ or $i=1, j=1$. Note that by Lemma \ref{p-valuation of chooses}, ${c_{ij}p\choose m} \equiv 0 \text{ } (\mod p)$ for $m < p$. Thus we can make the assumption $m \ge p $. But in this case we have
\[\begin{array}{ccccccc}
    m(h+ \height(\beta_1+\beta_2)) &\ge& p (h+ \height(\beta_1+\beta_2))\\
    &>& (1+h) (h+ \height(\beta_1+\beta_2)) = h+ \height(\beta_1+\beta_2)+ h(h+ \height(\beta_1+\beta_2))\\ &\ge& h+ \height(\beta_1+\beta_2) + h \text{ }(\text{as } h+ \height(\beta_1+\beta_2)\ge 1) \\
    &=& 2h+ \height(\beta_1+\beta_2).
\end{array}\]
Hence this proves \textbf{(R-6')}.

\vskip8pt

Next consider relation \textbf{(R-5)}. We want to show the relation, $\alpha_1, \alpha_2 \in \Phi^{+}$,
    \[(1+V_{\alpha_1})(1+V_{\alpha_2})=\prod_{\stackrel{i, j>0}{i\alpha_1+j\alpha_2 \in \Phi}}(1+V_{i\alpha_1+j\alpha_2})^{c_{ij}}(1+V_{\alpha_2})(1+V_{\alpha_1}),\]
 reduces to 
    \[V_{\alpha_1}V_{\alpha_2}=\sum_{\stackrel{i, j>0}{i\alpha_1+j\alpha_2 \in \Phi}}c_{ij}V_{i\alpha_1+j\alpha_2}+V_{\alpha_2}V_{\alpha_1}\text{ } (\mod \Fil^{\height(\alpha_1)+\height(\alpha_2)+1}).\]

This follows from the observations that 

\[(1+V_{i\alpha_1+j\alpha_2})^{c_{ij}}\equiv 1+ c_{ij}V_{i\alpha_1+j\alpha_2} \text{ } (\mod \Fil^{\height(\alpha_1)+\height(\alpha_2)+1})\]
by degree calculation and that $\deg(V_{i\alpha_1+j\alpha_2}) + \deg(V_{\alpha_i}) > \height(\alpha_1)+\height(\alpha_2) $ for $i\in \{1, 2\}$.
This shows \textbf{(R-5')}.
\vskip8pt

Next consider the relation \textbf{(R-7)}. For $\alpha \in \Phi^{+}, \beta \in \Phi^{-}$,
\[(1+V_{\alpha})(1+U_{\beta})=\prod_{\stackrel{i, j>0}{i\alpha+j\beta \in \Phi^{-}}}(1+U_{i\alpha+j\beta})^{c_{ij}p^{j-1}}\prod_{\stackrel{i, j>0}{i\alpha+j\beta \in \Phi^{+}}}(1+V_{i\alpha+j\beta})^{c_{ij}p^j}(1+U_{\beta})(1+V_{\alpha}).\]

\vskip8pt

Let us first deal with the case $i\alpha+ j\beta \in \Phi^{-}$. For $ \deg U_{i\alpha+j\beta}^m=m(h+ \height(i\alpha+j\beta))   > h+ \height(\beta)+\height(\alpha)$, we have $ U_{i\alpha+j\beta}^m=0 \text{ }(\mod \Fil^{h+ \height(\beta)+\height(\alpha)+1}). $ Hence we can restrict to the case 
$m(h+ \height(i\alpha+j\beta))   \leq h+ \height(\beta)+\height(\alpha).$
\vskip8pt

\[\begin{array}{ccccc}
   m(h+ \height(i\alpha+j\beta))    &\le& h+ \height(\beta)+\height(\alpha) \\
  \text{gives}\;   m &\le& \frac{h+ \height(\beta)+\height(\alpha)}{h+ \height(i\alpha+j\beta)}\\
  &\le& 2h -2 \;(\text{as } h+ \height(i\alpha+j\beta)\ge 1, \height(\beta) \leq -1, \height(\alpha)\leq h-1)\\
  &\le& 2(p-1)-2= 2p-4 \;(\text{as } p>1+h).
\end{array}\]

By Lemma \ref{p-valuation of chooses}, similar to the argument we gave while deducing \textbf{(R-6')}, we see that if $j-1 \ge 2$, then with this restriction on $m$, none of the corresponding terms will contribute modulo the filtration as the coefficients become trivial modulo $p$.

\vskip8pt

When $j-1=1$ or $j=2$ then we potentially will have contribution from $m=p$ term. In that case we have,
 $\deg(U_{i\alpha+ 2\beta})^p= p(h+ \height(i\alpha+2\beta))> (h+1)(h+ \height(i\alpha+2\beta))$.

\vskip8pt
 Note that as $h+\height(i\alpha+2\beta)>1$, $(h+1)(h+ \height(i\alpha+2\beta))\geq 2(\textit{h}+1)$. So, of course,
 $(h+1)(h+ \height(i\alpha+2\beta))> \textit{h}+ \height(\alpha)+ \height(\beta)$ if $\height(\alpha) +\height(\beta) < 1$.  So we assume

 $\height(\alpha)+ \height(\beta) \ge 1$

\[\begin{array}{ccccc}
  \height(\alpha)+ \height(\beta) &\ge& 1 \\
\text{hence }  \text{   } \text{   } i\height(\alpha) &\ge& i(1-\height(\beta))\\
\text{which gives } \text{   } \text{   } i\height(\alpha)+ 2\height(\beta) &\ge& i(1-\height(\beta))+2\height(\beta)= \textit{i}+ (2-\textit{i})\height(\beta).
\end{array}\]
So if $2-i\le 0$ then the right hand side is definitely $>0$, but this is not possible as $i\alpha+2\beta  $ was assumed to be a negative root. Thus we see that only value $i$ can take is $1$. But in that case
as $p>1+h$, 
\begin{align*}
  p(h + \height(\alpha)+ 2\height(\beta))&>(h+1)(h + \height(\alpha)+ 2\height(\beta))  \\
  &=(h + \height(\alpha)+\height(\beta)) + h\big(h + \height(\alpha)+ 2\height(\beta)\big)+\height(\beta)\\
  &=(h + \height(\alpha)+\height(\beta)) + h\big(h + \height(\alpha+2\beta)\big)+\height(\beta)\\
  &>(h + \height(\alpha)+\height(\beta))+(h+\height(\beta))\text{ } (\text{as } h+\height(\alpha+2\beta) >1)\\
  &>h + \height(\alpha)+\height(\beta) \text{ } (\text{as } h+\height(\beta) >1).
\end{align*}

When $j=1$ then 
 $m(h+ \height(i\alpha+\beta)) \geq   \textit{h}+ \height(\alpha)+\height(\beta)$ for all $ m$, with equality only when $m=1, i=1$. Thus the only contribution we can possibly have is $(1+c_{11} U_{\alpha+\beta})$.

 \vskip8pt

 Now to look at the contribution coming from positive roots part of the equation \textbf{(R-7)}, i.e. from the terms
 $\underset{\underset{i\alpha+j\beta \in \Phi^{+}}{i, j>0}}{\prod}(1+V_{i\alpha+j\beta})^{c_{ij}p^j}.$

By the same logic as above (i.e. bounding $m$ by $2p-4$, then showing that the binomial coefficients become trivial modulo $p$) this time also $j\ge 2$ gets ruled out immediately. The calculation is similar, hence omitted.
\vskip8pt 

When $j=1$ we are again concerned with the $m=p$ term only.

In this case we have 
\begin{align*}
\deg(V_{i\alpha+ \beta})^p= p\big( \height(i\alpha+\beta)\big)&>(h+1) \big(i\height(\alpha)+\height(\beta)\Big)\\
&=i\height(\alpha) +\height(\beta) +h \big(\height(i\alpha+\beta)\big)\\
&\geq h+\height\alpha+\height(\beta) \text{ } (\text{as } i>0, \height(i\alpha+\beta)>0).
\end{align*}

Thus relation \textbf{(R-7)} reduce to 

\[ (1+V_\alpha)(1+ U_\beta)= (1+c_{11} U_{\alpha+\beta})(1+ U_\beta)(1+ V_\alpha) \text{ }(\mod \Fil^{h+ \height(\beta)+\height(\alpha)+1}), \;\text{when}\;\alpha+\beta\in \Phi^{-}.\]

Now, $\deg(U_{\alpha+\beta}V_\alpha)=h+2\height(\alpha)+\height(\beta)>h+ \height(\beta)+\height(\alpha)$ and
\begin{align*}
\deg(U_{\alpha+\beta}U_\beta)&=2h+\height(\alpha) +2\height(\beta)\\
&=h+ \height(\beta)+\height(\alpha)+(h+\height(\beta))\\
&>h+ \height(\beta)+\height(\alpha) \text{ }(\text{as } h+\height(\beta) >1).
\end{align*}
hence, after further simplifying we obtain

 \[V_\alpha U_\beta= c_{11}U_{\alpha+\beta}+U_\beta V_\alpha\text{ }(\mod \Fil^{h+ \height(\beta)+\height(\alpha)+1})\]
 which is exactly relation \textbf{(R-7')}.
\end{proof}
Now consider relation \textbf{(R-8)}. First, we will show that  $(1+U_{-\alpha})^Q=(1+U_{-\alpha})\text{ }(\mod \Fil^{h+1})$. If $(h+\height(-\alpha))m >h$ then $U_{-\alpha}^m=0\text{ }(\mod \Fil^{h+1}).$ So we need to worry about the case $m \geq 2$ and $(h+\height(-\alpha))m \leq h$. But then again $m \leq h<p-1$ and so ${Q \choose m} =0\text{ } (\mod p)$ since $Q=1\text{ } (\mod p)$.

Next, we will show that  $(1+V_{\alpha})^Q=(1+V_{\alpha})\text{ }(\mod \Fil^{h+1})$.
If $m\height(\alpha) >h$, the the degree of $V_{\alpha}^m$ is strictly larger than $h$. If  $m\height(\alpha) \leq h$, then $m \leq h< p-1$ and so again ${Q \choose m} =0\text{ } (\mod p)$.
Hence relation \textbf{(R-8)} reduces to
$$(1+V_\alpha)(1+ U_{-\alpha})= (1+ U_{-\alpha})\underset{\delta_i\in \Delta}{\prod}(1+ W_{\delta_i})^{n_i }(1+V_\alpha)\text{ }(\mod \Fil^{h+1}).$$
Since $\deg(W_{\delta_i})=h$ and $ n_i \geq 0$, further simplification yields $$V_\alpha U_{-\alpha} = \underset{\delta_i \in \Delta}{\sum}n_iW_{\delta_i}+U_{-\alpha}V_\alpha \text{ } (\mod \Fil^{h+1}).$$

\subsection{Computing inversions modulo filtrations}\label{sec:inversion}

   Recall that $\{X_1,\cdots, X_d \}$ was an ordered set of variables generating the noncommutative $p$-adic power series ring $\mathcal{A}$. We set $X^i:=X_{i_1}\cdots X_{i_w}$ to mean the monomial $X_{i(1)}\cdots X_{i(w)}$. Then we can change $X^i$ to a well-ordered monomial ($b \rightarrow i_b$ increasing) by a sequence of transpositions. Consider a move $(b,b+1) \rightarrow (b+1,b)$ and assume $i_b >i_{b+1}.$ We write $$X^i=X^fX_bX_{b+1}X^e,$$ and we look at $X^i \text{ } (\mod \Fil^{\deg(X^i)+1}).$ Suppose   $(X_b,X_{b+1})$ is any pair of variables among $\{U_\beta, W_{\delta_i}, V_\alpha, \beta \in \Phi^-, \delta_i \in \Delta, \alpha \in \Phi^+\}$ but in the wrong ordering (for example, let $(X_b,X_{b+1})=(W_\delta,V_\alpha)$ from relation $\textbf{(R-2')}$).
    From Proposition \ref{prop:wrongorder}, we see that variables in the wrong ordering \textit{commute module filtration} of an appropriate degree except relations \textbf{(R-5')}, \textbf{(R-7')} and \textbf{(R-8')}. Hence, except for these three relations, $$X_bX_{b+1}=X_{b+1}X_b\text{ } (\mod \Fil^{\deg(X_b)+\deg(X_{b+1})+1}).$$ Therefore, we can reduce the number of inversions and change the wrong ordered variables into right order, modulo suitable filtration.
    For the three relations \textbf{(R-5')}, \textbf{(R-7')} and \textbf{(R-8')}, we have to compute the number of inversions explicitly.
     
 \begin{lemma}\label{Counting inversions}
        Suppose $(X_b,X_{b+1})$ be either of the three pairs  $(V_{\alpha_1},V_{\alpha_2}),$  $(V_\alpha,U_\beta)$ or $(V_\alpha,U_{-\alpha})$ in the wrong ordering  coming from left hand side of the relations \textbf{(R-5')}, \textbf{(R-7')} and \textbf{(R-8')} satisfying equations of the form 
      $$X_bX_{b+1}=X_{b+1}X_b+ \text{ linear terms in } X_i \text{ } (\mod \Fil^{\deg(X_b)+\deg(X_{b+1})+1}).$$
      Then the number of inversions in $X^fX_{b+1}X_bX^e$ and $X^fX_iX^e$ are both strictly less than the number of inversions of $X^i$. That is, the two moves $X_{b}X_{b+1} \rightarrow X_{b+1}X_{b},$ and $X_{b}X_{b+1} \rightarrow X_i$ reduces the number of inversions modulo $\Fil^{\deg(X_b)+\deg(X_{b+1})+1}$.
    \end{lemma}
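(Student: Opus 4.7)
The plan is to handle the two moves separately. For the transposition $X^fX_bX_{b+1}X^e \to X^fX_{b+1}X_bX^e$, only the status of the adjacent pair $(X_b, X_{b+1})$ changes — it is an inversion by the wrong-order hypothesis and is no longer one after the swap, while the contribution of any third position to the inversion count involves the unordered set $\{X_b, X_{b+1}\}$ and is therefore unaffected. Hence the count drops by exactly $1$.

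For the substitution $X^fX_bX_{b+1}X^e \to X^fX_iX^e$, the key observation is a \emph{sandwich property} in the fixed total order on $S$: in each of the three relevant cases one has $X_{b+1} < X_i < X_b$. For \textbf{(R-8')} with $(X_b,X_{b+1},X_i)=(V_\alpha, U_{-\alpha}, W_{\delta_j})$, this is immediate from the block order $U_\bullet < W_\bullet < V_\bullet$. For \textbf{(R-7')} with $(X_b,X_{b+1},X_i)=(V_\alpha, U_\beta, U_{\alpha+\beta})$ and $\alpha+\beta\in\Phi^-$, $U_{\alpha+\beta} < V_\alpha$ is block-ordering, while $U_\beta < U_{\alpha+\beta}$ follows from $\mathrm{ht}(\alpha+\beta) > \mathrm{ht}(\beta)$ (as $\alpha\in\Phi^+$) combined with the convention that $\Phi^-$ is ordered by increasing height. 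For \textbf{(R-5')} with $(X_b,X_{b+1},X_i)=(V_{\alpha_1}, V_{\alpha_2}, V_\gamma)$, where $\gamma=i\alpha_1+j\alpha_2\in\Phi^+$ with $i,j\geq 1$, the desired $\alpha_2 < \gamma < \alpha_1$ is the direct content of Definition 2.3.2(1) when $i=j=1$. For higher $i+j$ one extends it using the convexity of compatible orderings (they are the convex/normal orderings of $\Phi^+$ coming from reduced decompositions of $w_0$), so that any positive integer combination of $\alpha_2 < \alpha_1$ that is itself a positive root lies strictly between them. Carrying out this extension — effectively verifying it via the rank-2 subsystem generated by $\alpha_1, \alpha_2$ and the classification of such subsystems — is the main technical obstacle.

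Granted the sandwich property, the inversion count is straightforward. Inversions in $X^fX_bX_{b+1}X^e$ split into those internal to $X^fX^e$, the single inversion $(X_b,X_{b+1})$, and cross-contributions $\mathbf{1}_{Y>X_b}+\mathbf{1}_{Y>X_{b+1}}$ for each $Y\in X^f$ and $\mathbf{1}_{X_b>Y'}+\mathbf{1}_{X_{b+1}>Y'}$ for each $Y'\in X^e$. In $X^fX_iX^e$ the internal inversions are unchanged, the $(X_b,X_{b+1})$ term is absent, and the cross-contributions become $\mathbf{1}_{Y>X_i}$ and $\mathbf{1}_{X_i>Y'}$. The sandwich $X_{b+1}<X_i<X_b$ yields $\mathbf{1}_{Y>X_i}\leq \mathbf{1}_{Y>X_{b+1}}$ (as $Y>X_i$ forces $Y>X_{b+1}$) and symmetrically $\mathbf{1}_{X_i>Y'}\leq \mathbf{1}_{X_b>Y'}$. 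Summing, every cross-contribution is non-increasing while the $(X_b,X_{b+1})$ inversion of weight $1$ is lost, producing a strict drop of at least one in the total inversion count.
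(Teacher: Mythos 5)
Your argument is correct and is essentially the paper's own proof: both hinge on the sandwich property $X_{b+1}<X_i<X_b$ in the total order (the paper's key fact $\I(V_{\alpha_2})<\I(V_{i\alpha_1+j\alpha_2})<\I(V_{\alpha_1})$, together with $\I(U_\beta)<\I(U_{\alpha+\beta})<\I(V_\alpha)$ and $\I(U_{-\alpha})<\I(W_{\delta_i})<\I(V_\alpha)$), and then compare, position by position, the cross-contributions to the inversion count before and after the move, your indicator-function bookkeeping being the same computation as the paper's explicit sums. The one step you flag as ``the main technical obstacle'' --- the sandwich for $i\alpha_1+j\alpha_2$ with $i+j>2$ --- is not a real obstacle: it follows by iterating condition (1) of Definition \ref{compatible ordering }, writing each such root as a sum of two roots already placed strictly between $\alpha_2$ and $\alpha_1$ inside the rank-two subsystem, and the paper itself simply cites the compatible ordering at this point without further detail.
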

  \begin{proof}
 First let us compute the number of inversions of $X^i$.
Let $\xi$ and $\mu$ be denote indices $\neq b, b+1$ in the product $X^i$. 

Consider relation \textbf{(R-5')}.
The number of inversions in $X^i$ was originally as follows:

$$\mathrm{inv}=\underset{\underset{i_\xi>i_\mu}{\xi<\mu}}{\sum}1+\underset{\underset{i_\mu \in [\mathrm{I}(V_{\alpha_2}),\mathrm{I}(V_{\alpha_1}))}{\mu>b+1}}{\sum}1+\Big(2\underset{\underset{i_\mu<\mathrm{I}(V_{\alpha_2})}{\mu>b+1}}{\sum}1\Big)+\underset{\underset{i_\xi \in (\mathrm{I}(V_{\alpha_2}),\mathrm{I}(V_{\alpha_1})]}{\xi<b}}{\sum}1+\Big(2\underset{\underset{i_\xi>\mathrm{I}(V_{\alpha_1})}{\xi<b}}{\sum}1\Big)+1,$$

where $\mathrm{I}(*):=\mathrm{index}(*)$ is the index of the variable $*$.

As $\I(V_{\alpha_2}) <\I(V_{\alpha_1})$, the transition $V_{\alpha_1}V_{\alpha_2} \rightarrow V_{\alpha_2}V_{\alpha_1}$ clearly reduces the number of inversions, and by changing $V_{\alpha_1}V_{\alpha_2}$ to $c_{ij}V_{i{\alpha_1}+j{\alpha_2}}$ we get

$${\mathrm{inv}^{\prime}}=\underset{\underset{i_\xi>i_\mu}{\xi<\mu}}{\sum}1+\underset{\underset{i_\mu<\mathrm{I}(V_{i{\alpha_1}+j{\alpha_2}})}{\mu>b+1}}{\sum}1+\underset{\underset{i_\xi>\mathrm{I}(V_{i{\alpha_1}+j{\alpha_2}})}{\xi<b}}{\sum}1$$

Because of the choice of an \textit{additive} ordering on the set of positive roots in Section \ref{sec:additive}, 
\begin{equation}\label{eq:keyfact}
 \I(V_{\alpha_2}) < \I(V_{i{\alpha_1}+j{\alpha_2}}) <\I(V_{\alpha_1}).\end{equation}

\vspace{.2cm}
Hence the second term of  ${\mathrm{inv}^{\prime}}$ is 
\begin{align*}
\underset{\underset{i_\mu<\mathrm{I}(V_{i{\alpha_1}+j{\alpha_2}})}{\mu>b+1}}{\sum}1 \leq \underset{\underset{i_\mu<\mathrm{I}(V_{{\alpha_1}})}{\mu>b+1}}{\sum}1=\underset{\underset{i_\mu \in [\mathrm{I}(V_{\alpha_2}),\mathrm{I}(V_{\alpha_1}))}{\mu>b+1}}{\sum}1+\underset{\underset{i_\mu<\mathrm{I}(V_{\alpha_2})}{\mu>b+1}}{\sum}1
\leq \underset{\underset{i_\mu \in [\mathrm{I}(V_{\alpha_2}),\mathrm{I}(V_{\alpha_1}))}{\mu>b+1}}{\sum}1+\Big(2\underset{\underset{i_\mu<\mathrm{I}(V_{\alpha_2})}{\mu>b+1}}{\sum}1\Big),
\end{align*}
and the third term of ${\mathrm{inv}^{\prime}}$ is
\begin{align*}
\underset{\underset{i_\xi>\mathrm{I}(V_{i{\alpha_1}+j{\alpha_2}})}{\xi<b}}{\sum}1 \leq \underset{\underset{i_\xi>\mathrm{I}(V_{{\alpha_2}})}{\xi<b}}{\sum}1 = \underset{\underset{i_\xi \in (\mathrm{I}(V_{\alpha_2}),\mathrm{I}(V_{\alpha_1})]}{\xi<b}}{\sum}1+\underset{\underset{i_\xi>\mathrm{I}(V_{\alpha_1})}{\xi<b}}{\sum}1 \leq \underset{\underset{i_\xi \in (\mathrm{I}(V_{\alpha_2}),\mathrm{I}(V_{\alpha_1})]}{\xi<b}}{\sum}1+\Big(2\underset{\underset{i_\xi>\mathrm{I}(V_{\alpha_1})}{\xi<b}}{\sum}1\Big).
\end{align*}
Hence ${\mathrm{inv}^{\prime}}<\mathrm{inv}.$ for the relation \textbf{(R-5')}.

The proofs for the relations \textbf{(R-7')} and \textbf{(R-8')} are similar and hence omitted, the key necessary things corresponding to \eqref{eq:keyfact} being 
$\I(U_\beta) <\I(U_{\alpha+\beta}) <\I(V_\alpha)$ for the relation \textbf{(R-7')} 
and 
$\I(U_{-\alpha}) <\I(W_{\delta_i}) <\I(V_\alpha)$ for the relation \textbf{(R-8')}.
  \end{proof}

\section{Proof of the main theorem}
In this section we recall and prove the main theorem in two steps. Both the proofs are exactly same proof as that of Clozel (see \cite[pg. 554,  Theorems 1.2 and 1.4]{Clo11}), we recall them here for the purpose of completion.

\begin{theorem}\label{Main isomorphism of mod p iwasawa algebras}
The mod-$p$ Iwasawa algebra $\Omega(I)$ is naturally isomorphic to $\ovcA/\ovcR$.
    
\end{theorem}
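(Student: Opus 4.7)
The plan is to deduce the theorem by a standard filtered argument, combining the surjection $\psi$ of Lemma \ref{natural surjection}, the dimension bound of Theorem \ref{dimension bounding}, and completeness of the filtrations on both sides.

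First I would verify that the surjection $\psi:\cA \to \Lambda(I)$ of Lemma \ref{natural surjection} annihilates the ideal $\cR$. By construction, $\psi$ sends each $X_i$ to the corresponding generator of $\Lambda(I)$ under the identification of Lemma \ref{Zp module isomorphism of Iwasawa algebra}, and the defining relations of $\cR$ are the direct translations via $\psi$ of the Chevalley relations, which hold in $G$ and hence in the group ring $\cO[I]$ (and its completion $\Lambda(I)$). Since $\cR$ is a closed two-sided ideal and $\psi$ is continuous, $\psi(\cR) = 0$. Reducing mod $p$ and quotienting by $\overline{\cR}$ yields a continuous surjective ring homomorphism
\[
\overline{\psi}: \overline{\cB} := \overline{\cA}/\overline{\cR} \longrightarrow \Omega(I).
\]

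Next I would equip both sides with their filtrations so that $\overline{\psi}$ becomes a filtered map. On $\overline{\cB}$ use $F^n\overline{\cB} = (F^n\overline{\cA} + \overline{\cR})/\overline{\cR}$ as defined before Theorem \ref{dimension bounding}, and on $\Omega(I)$ use the filtration induced from the (rescaled) $p$-valuation $\tilde{\omega}$ as in Theorem \ref{Dimension of graded iwasawa}. Since by Theorem \ref{Ordered basis} the generators $X_i$ map to generators of $\Omega(I)$ of the same assigned weighted degree, $\overline{\psi}$ respects the filtrations. Surjectivity of $\overline{\psi}$ immediately gives surjectivity of the induced map on associated graded pieces, so
\[
\dim_{\Fp} \mathrm{gr}^n \overline{\cB} \;\geq\; \dim_{\Fp} \mathrm{gr}^n \Omega(I)
\]
for every $n \geq 0$. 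Combining this with the inequality in the opposite direction from Theorem \ref{dimension bounding} (and using that $\mathrm{gr}^n \Omega(I)$ is finite-dimensional by Theorem \ref{Dimension of graded iwasawa}), I conclude that $\mathrm{gr}^n \overline{\psi}$ is an isomorphism of finite-dimensional $\Fp$-vector spaces for every $n$.

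Finally, to pass from an isomorphism on graded pieces to an isomorphism of rings, I would invoke the standard completeness argument: both $\overline{\cB}$ and $\Omega(I)$ are complete and separated with respect to their filtrations ($\overline{\cB}$ is a quotient of the complete ring $\overline{\cA}$ by a closed ideal, and $\Omega(I)$ is a quotient of the pseudo-compact ring $\Lambda(I)$), so a filtered morphism between them whose associated graded is an isomorphism is itself an isomorphism. This yields the desired identification $\overline{\cA}/\overline{\cR} \cong \Omega(I)$. The main obstacle in this argument is Theorem \ref{dimension bounding} itself, already established in Section \ref{bound_dimenion}; the remaining steps above are formal once one checks that $\overline{\psi}$ is a filtered surjection with the correct assignment of degrees to the generators.
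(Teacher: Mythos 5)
Your proposal is correct and follows essentially the same route as the paper: descend $\psi$ to a filtered surjection $\overline{\psi}:\overline{\cA}/\overline{\cR}\to\Omega(I)$, observe that the induced map on associated graded pieces is surjective, conclude it is an isomorphism by the dimension bound of Theorem \ref{dimension bounding}, and lift back via completeness of the filtration. Your write-up is in fact slightly more careful than the paper's, in that you explicitly verify $\psi(\cR)=0$ and that the generators are matched with the correct weighted degrees.
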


\begin{proof}
The natural map $\psi : \cA \ra \Lambda(I)$ sends $\cM^n_{\cA}$ to $\cM^n_{\Lambda(I)}$, where $\cM_{\Lambda(I)}$ is the maximal ideal of $\Lambda(I)$.  The reduction mod-$p$ of $\psi$ respects the natural filtration on both sides and thus induce a natural map $\gr\psi: \gr\cA\ra \gr\Lambda(I)$. The induced map on the associated graded is  surjective since $\psi$ is surjective. Thus it is an isomorphism by Theorem  \ref{dimension bounding}. This then imply the theorem since the filtration on $\cB$ is complete.
\end{proof}

\begin{theorem}\label{Main isomorphism of iwasawa algebras}
The Iwasawa algebra $\Lambda(I)$ is naturally isomorphic to $\cA/\cR$.
    
\end{theorem}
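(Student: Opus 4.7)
My plan is to lift Theorem~\ref{Main isomorphism of mod p iwasawa algebras} to the integral level by a standard $p$-adic Nakayama-style argument, exactly as carried out by Clozel. By Lemma~\ref{natural surjection} I already have a continuous surjection $\psi : \cA \to \Lambda(I)$, and the relations listed in Section~\ref{relations} are precisely the Chevalley relations recalled before, so each of \textbf{(R-1)}--\textbf{(R-8)} holds in $\Lambda(I)$. Hence $\cR \subseteq \ker \psi$ and $\psi$ descends to a continuous surjection $\bar\psi : \cA/\cR \twoheadrightarrow \Lambda(I)$; the whole task is to show $\bar\psi$ is injective.

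I put $K := \ker\bar\psi$ and consider the short exact sequence of $\Zp$-modules
\[0 \to K \to \cA/\cR \xrightarrow{\bar\psi} \Lambda(I) \to 0.\]
Since $\Lambda(I) = \varprojlim_{N} \cO[I/N]$ is an inverse limit of free $\Zp$-modules, it is $p$-torsion free, so $\mathrm{Tor}_1^{\Zp}(\Lambda(I),\Fp) = 0$. Tensoring the above sequence with $\Fp$ over $\Zp$ and using the identification $(\cA/\cR) \otimes_{\Zp} \Fp = \cA/(p\cA + \cR) = \ovcA/\ovcR$ yields
\[0 \to K/pK \to \ovcA/\ovcR \to \Omega(I) \to 0,\]
in which the right-hand arrow is the mod-$p$ isomorphism of Theorem~\ref{Main isomorphism of mod p iwasawa algebras}. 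Hence $K/pK = 0$, i.e., $K = pK$.

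Now I iterate. For any $x \in K$, write $x = p y_1$ with $y_1 \in \cA/\cR$; since $p\,\bar\psi(y_1) = \bar\psi(x) = 0$ in the $p$-torsion-free ring $\Lambda(I)$, we get $\bar\psi(y_1) = 0$, hence $y_1 \in K = pK$. Continuing, one produces $x = p^n y_n$ for every $n \geq 0$. Because $p$ lies in the two-sided ideal $\frm_\cA = (p, X_1, \ldots, X_d)$, we have $p^n(\cA/\cR) \subseteq (\frm_\cA^n + \cR)/\cR$, so $x$ lies in $\bigcap_{n} (\frm_\cA^n + \cR)/\cR$. Since the valuation $v_\cA$ makes $\cA$ separated in the $\frm_\cA$-adic topology and $\cR$ is by definition a closed ideal, this intersection vanishes, so $x = 0$. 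Thus $\bar\psi$ is an isomorphism of topological rings.

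The only conceptually substantive input is Theorem~\ref{Main isomorphism of mod p iwasawa algebras}, which is already in hand; the lift from characteristic $p$ to characteristic zero is formal, and the subtlest point is the $p$-adic separatedness of $\cA/\cR$, which is controlled by the closedness of $\cR$ together with the $\frm_\cA$-adic completeness and separatedness of $\cA$. I expect no genuine obstacle in this step.
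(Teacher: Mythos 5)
Your proposal is correct and follows essentially the same route as the paper: both arguments reduce to the mod-$p$ isomorphism, iterate using the $p$-torsion-freeness of $\Lambda(I)$ to write any element of the kernel as $p^n\cdot(\text{something})$ for all $n$ (the paper phrases this as $f=r_n+p^nf_n$ with $r_n\in\cR$), and conclude from the closedness of $\cR$ and the $\frm_\cA$-adic separatedness of $\cA$. Your Tor/Nakayama packaging of the first step is only a cosmetic variant of the paper's direct computation.
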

We replicate the proof which is also the same argument as Clozel's corresponding statement.
\begin{proof}
    The reduction of $\psi: \cA/\cR \ra \Lambda(I)$ is $\overline{\psi}$. Recall that $\ovcR$ is the image of $\cR$ in $\ovcA$. Assume $f\in \cA$ satisfies $\psi(f)=0$. We then have $\overline{f}\in \overline{\cR}$ by Theorem \ref{Main isomorphism of mod p iwasawa algebras}. So $f = r_ 1+ p f_1$, $r_1 \in \cR, f_1 \in \cA$. Then by applying $\psi$ to both sides we get $\psi(f_1) = 0$. Inductively, we obtain an expression $f = r^n + p^nf_n$ with $r_n\in \cR$ and $f_n\in \cA$. Since $p^nf_n$ tends to  $0$ in $\cA$ and $\cR$ is closed, we see that $f \in \cR$.
\end{proof}

\vskip8pt
\bibliographystyle{alpha}
\bibliography{mybib}
\end{document}